\let\oldlabel=\label
\def\prellabel{\marginparsep=1em\marginparwidth=44pt
\def\label##1{\oldlabel{##1}\ifmmode\else\ifinner\else
\marginpar{{\footnotesize\ \\ \tt
##1}}\fi\fi}}
\newcommand{\mm}{\mathfrak m}
\newcommand{\N}{\mathbb{N}}
\newcommand{\Q}{\mathbb{Q}}
\newcommand{\ff}{{\bf f}}
\newcommand{\bg}{{\bf g}}
\newcommand{\ub}{{\bf u}}
\newcommand{\jb}{{\bf j}}
\newcommand{\vb}{{\bf v}}
\DeclareMathOperator{\depth}{depth}
 \DeclareMathOperator{\ini}{in}
\DeclareMathOperator{\reg}{reg}
\DeclareMathOperator{\Tor}{Tor}
\DeclareMathOperator{\dirsum}{\oplus}
\DeclareMathOperator{\pnt}{\raise 0.5mm \hbox{\large\bf.}}
\newtheorem{thm}{\bf Theorem}[section]
\newtheorem{lem}[thm]{\bf Lemma}
\newtheorem{cor}[thm]{\bf Corollary}
\newtheorem{prop}[thm]{\bf Proposition}
\theoremstyle{definition}
\newtheorem{rem}[thm]{\bf Remark}
\newtheorem{ex}[thm]{\bf Example}
\begin{document}

\title{Regularity bounds for  Koszul cycles}
\author{Aldo Conca} 
\address{Dipartimento di Matematica, Universit\'{a} di Genova, Via Dodecaneso 35, 16146 Genova,
 Italy} \email{conca@dima.unige.it}

\author{Satoshi Murai}
\address{
Satoshi Murai,
Department of Mathematical Science,
Faculty of Science,
Yamaguchi University,
1677-1 Yoshida, Yamaguchi 753-8512, Japan.
}

\email{murai@yamaguchi-u.ac.jp}

\subjclass[2010]{13D02, 13D03}

\keywords{Castelnuovo-Mumford regularity, Koszul cycles, Koszul homology}

\thanks{Research of the second author was partially supported by KAKENHI 22740018}

\dedicatory{ }

\begin{abstract}
We study the Castelnuovo-Mumford regularity of the module of Koszul cycles $Z_t(I,M)$ of a homogeneous ideal $I$ in a polynomial ring $S$    with respect to a graded module $M$ in the homological position $t\in \N$. Under mild assumptions on the base field we prove in Theorem \ref{th1}  that $\reg Z_t(I,S)$ is a subadditive function of $t$ when $\dim S/I=0$. For Borel-fixed ideals $I,J$ we prove in Theorem \ref{thm2} that $\reg Z_t(I,S/J)\leq t(1+\reg I)+\reg S/J$,  a result already announced in \cite{BCR2} by Bruns, Conca and R\"omer. \end{abstract}

 \maketitle

%
%
%

\section*{Introduction} Let $S$ be a polynomial ring over a field $K$, say of characteristic $0$ for simplicity. Let $I\subset S$ be a homogeneous ideal of $S$ and $M$ a finitely generated graded module. Denote by $\reg M$ the Castelnuovo-Mumford regularity of $M$.    Denote by $K(I,M)$ the Koszul complex associated to a minimal system of generators of $I$ with coefficients in $M$. Let $Z_t(I,M)$ be the $S$-module of  cycles of homological position $t$ of $K(I,M)$. If there is no danger of confusion, we simply denote by $Z_t$ the module $Z_t(I,S)$. By construction $Z_1$ is the first syzygy module of $I$ and so by definition we have
$$\reg Z_1=1+\reg I$$
unless $I$ is principal (in that case $Z_1=0$ and it has regularity $-\infty$  by convention). 
 
Our study of regularity bounds for the Koszul cycles and homology has its motivations and origin in the work of  Green \cite{G} who   proved   (among other things) a regularity bound for the Koszul homology of the powers of the maximal ideal in a polynomial ring. 
Green's result gives a  bound for the degrees of the syzygies of the Veronese varieties. In  \cite{BCR1} and  \cite{BCR2}  better regularity bounds for Koszul cycles and homology have been proved and that led to an improvement of our understanding of the syzygies of Veronese varieties.  In particular, generalizing results of \cite{BCR1},  in  \cite[Prop.3.2]{BCR2} it is shown  that 
\begin{equation}
\label{eq:1} \reg Z_t(I,M)\leq t(1+\reg I)+\reg(M) 
\end{equation}
holds for every $t$  when $\dim M/IM\leq 1$ and examples are given showing that Eq.(\ref{eq:1}) does not hold in general. 
It is also asked in \cite{BCR2} whether the inequality
\begin{equation}
\label{eq:2} \reg Z_t \leq t(\reg I+1) 
\end{equation}
does hold in general. In this paper we give examples showing that  Eq.(\ref{eq:2}) does not hold in general but we  show that in two special cases variants of  Eq.(\ref{eq:1}) and   Eq.(\ref{eq:2}) do hold. In details,  we show that if $\dim S/I=0$ then 
\begin{equation}
\label{eq:3} \reg Z_{s+t} \leq  \reg Z_{s}+  \reg Z_{t}
\end{equation}
holds for all $s$ and $t$.   And we also prove that 
\begin{equation}
\label{eq:4} \reg Z_t(I,S/J)\leq t(\reg I+1)+\reg(S/J) 
\end{equation}
holds whenever $I$ and $J$ are Borel-fixed ideals. This result was already announced in \cite[Thm.3.8]{BCR2}.

\section{Generalities} 
\label{} 
In this section we collect notation and general facts about Koszul complexes. As a general reference for facts concerning Koszul complex and homology the reader can consult for instance Bruns and Herzog \cite[Chap.1]{BH}.   

Let $S=K[x_1,\dots,x_n]$ be a polynomial ring over a field $K$.  The maximal homogeneous ideal $(x_1,\dots,x_n)$ of $S$ is denoted by $\mm_S$ or just by $\mm$. Let $I\subset S$ be an ideal minimally generated by homogeneous polynomials  $f_1,\dots,f_m$.  Denote by $K(I,S)$ the Koszul complex associated to the $S$-linear map $\phi: F=\oplus S(-\deg f_i) \to S$ defined by $\phi(e_i)=f_i$. Given a graded $S$-module $M$ we set $K(I,M)=K(I,S)\otimes M$. We consider both $K(I,S)$ and $K(I,M)$  graded complexes with maps of degrees $0$. 
 We have decompositions  $K(I,S)=\bigoplus_{t=0}^m K_t(I,S)=\bigwedge^\bullet F$ and
$K(I,M)=\bigoplus_{t=0}^mK_t(I,M)=\bigwedge^\bullet F\otimes M$. 
The complex  $K(I,M)$ can be seen as a graded module over the exterior algebra $K(I,S)$. For $a\in K(I,S)$ and $b\in
K(I,M)$ the multiplication will be denoted by $a.b$. The
differential of $K(I,S)$ and $K(I,S)$ will be denoted simply
by $\phi$ and it satisfies
$$\phi(a.b)= \phi(a).b + (-1)^s a.\phi(b)$$
for all $a\in K_s(I,S)$ and $b\in K(I,M)$. We let
$Z_t(I,M)$, $B_t(I,M)$, $H_t(I,M)$  denote the cycles, the
boundaries and the homology in homological position $t$ and set
$Z(I,M)=\dirsum Z_t(I,M)$ and so on. One knows that
$Z(I,S)$ is a (graded-commutative) $S$-subalgebra of $K(I,S)$ and that $B(\phi, R)$ is
a homogeneous  ideal of $Z(I,S)$ so that the homology $H(I,S)$ is itself
a (graded-commutative) $S$-algebra. More generally,  $Z(I,M)$ is a $Z(I,S)$-module. We
will denote by $Z_s(I,S)Z_t(I,M)$ the image of the
multiplication map $Z_s(I,S)\otimes Z_t(I,M)\to
Z_{s+t}(I,M)$. Similarly, $Z_1(I,S)^t$ will denote the image
of the map $\bigwedge^t Z_1(I,S)\to Z_{t}(I,S)$.

By construction, Koszul cycles, boundaries and homology have an induced graded structure. 
An index on the left of a graded module always denotes the selection of the homogeneous
component of that degree.

Denote by $\{e_1,\dots,e_m\}$ the canonical basis of the free $S$-module  $F=\oplus S(-\deg f_i)$, so that $\deg e_i=\deg f_i$. Given
$\ub=\{u_1,\dots,u_s\}\subset [m]$ with $u_1<u_2<\dots<u_s$ we write
$e_\ub$ for the corresponding basis element $e_{u_1}\wedge \cdots
\wedge e_{u_s}$ of $\bigwedge^s F$.  Alternatively we use the symbol $[f_{u_1},\dots,f_{u_s}]$ to denote $e_\ub$ which is a homogeneous element of degree $\sum_i  \deg f_{u_i}$.

Any element $g\in \bigwedge^s F\otimes M$ can be written uniquely as
$g=\sum e_\ub \otimes m_\ub$ with $m_\ub\in M$ where the sum is over the
subsets of cardinality $s$ of $[m]$. If $m_\ub= 0$ then we will say
that $e_\ub$ does not appear in $g$. For every $g\in K_{s+t}(I, M)$
and for every $\ub\subset [m]$ with $s=\# \ub$ we have a unique
decomposition
\begin{equation}
\label{deco} g=a_\ub(g)+ e_\ub.b_\ub(g)
\end{equation}
with $a_\ub(g)\in K_{s+t}(I, M)$ and $b_\ub(g)\in K_{t}(I, M)$  provided we require that   $e_\jb$ does not appear in $a_\ub(g)$ whenever $\jb\supset \ub$
and that $e_\vb$ does not appear in $b_\ub(g)$ whenever $\vb\cap \ub\neq \emptyset$.
With the notation above in \cite[Lemma 2.2 and 2.4]{BCR2} it is proved that: 
\begin{lem}
\label{le1}
\item[(1)] If  $g \in Z_{s+t}(I, M)$ then $b_\ub(g)\in Z_{t}(I, M)$
for every $\ub$ with $s=\# \ub$.
\item[(2)]  The assignment  $$\beta_t(g)= \sum_\ub e_\ub\otimes b_\ub(g)$$
where the sum is over the $\ub\subset [m]$ with $\#\ub=s$  gives a homomorphism 
$$\beta_t:  Z_{s+t}(I, M)\to Z_{s}(I, Z_t(I,M))$$  
of $S$-modules. 
\item[(3)]  The assignment  $$\alpha_t(\sum_i a_i\otimes g_i)= \sum_i  a_i.g_i$$
 gives a  homomorphism 
$$\alpha_t:   Z_{s}(I, Z_t(I,M)) \to Z_{s+t}(I, M)$$ 
of $S$-modules. 
\item[(4)] The composition $\alpha_t \circ \beta_t$ is the multiplication by the $\binom{t+s}{s}$. Hence $Z_{s+t}(I, M)$ is a direct summand of $Z_{s}(I, Z_t(I,M))$ as an $S$-module provided  $\binom{t+s}{s}$ is invertible in $K$.
\end{lem}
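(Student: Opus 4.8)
The plan is to deduce all four statements formally from the Leibniz rule for $\phi$ together with the combinatorics of the decomposition (\ref{deco}), in the order (1), (2), (3), (4). For (1), I would fix $g\in Z_{s+t}(I,M)$ and $\ub$ with $\#\ub=s$, write $g=a_\ub(g)+e_\ub.b_\ub(g)$, and apply $\phi$:
$$0=\phi(g)=\phi(a_\ub(g))+\phi(e_\ub).b_\ub(g)+(-1)^s\, e_\ub.\phi(b_\ub(g)).$$
Now compare, summand by summand, the parts supported on basis monomials $e_\jb$ with $\jb\supseteq\ub$. By the support conventions of (\ref{deco}) no such monomial occurs in $a_\ub(g)$, hence none in $\phi(a_\ub(g))$; in $\phi(e_\ub).b_\ub(g)$ every monomial has the form $e_{\ub\setminus\{k\}}\wedge e_\vb$ with $\vb\cap\ub=\emptyset$, so it meets $\ub$ in the proper subset $\ub\setminus\{k\}$; and $(-1)^s e_\ub.\phi(b_\ub(g))$ is supported entirely on monomials containing $\ub$. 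So the identity, restricted to its monomials containing $\ub$, reads $(-1)^s e_\ub.\phi(b_\ub(g))=0$; and since $\phi(b_\ub(g))$ is supported away from $\ub$, wedging by $e_\ub$ is injective on it, whence $\phi(b_\ub(g))=0$, i.e.\ $b_\ub(g)\in Z_t(I,M)$.

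For (2), I would first observe, by unwinding (\ref{deco}), that $\beta_t$ is the restriction to cycles of the $S$-linear comultiplication $\Delta_{s,t}\colon K_{s+t}(I,M)\to K_s(I,K_t(I,M))=\bigwedge^s F\otimes\bigwedge^t F\otimes M$ given on basis elements by
$$\Delta_{s,t}(e_\jb\otimes m)=\sum_{\jb=\ub\sqcup\vb,\ \#\ub=s}\sigma(\ub,\vb)\,e_\ub\otimes e_\vb\otimes m,$$
where $\sigma(\ub,\vb)$ is the sign determined by $e_\ub\wedge e_\vb=\sigma(\ub,\vb)\,e_{\ub\cup\vb}$; in particular $\beta_t$ is an $S$-module homomorphism. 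Let $\phi_{[s]}$ denote the operator on $\bigwedge^s F\otimes\bigwedge^t F\otimes M$ that applies $\phi$ to the first tensor factor and the identity to the rest, i.e.\ the differential of $K_\bullet(I,K_t(I,M))$ in homological degree $s$. The key point is the identity $\phi_{[s]}\circ\Delta_{s,t}=\Delta_{s-1,t}\circ\phi$, which I would verify by a direct computation on basis monomials, tracking the signs coming from $\phi$ and from $\sigma$. Granting it, for $g\in Z_{s+t}(I,M)$ we get $\phi_{[s]}(\beta_t(g))=\Delta_{s-1,t}(\phi(g))=0$, while part (1) gives $b_\ub(g)\in Z_t(I,M)$ for every $\ub$, so that $\beta_t(g)\in\bigwedge^s F\otimes Z_t(I,M)$. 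Since the differential of $K(I,Z_t(I,M))$ is the restriction of $\phi_{[s]}$, this is precisely the statement that $\beta_t(g)\in Z_s(I,Z_t(I,M))$.

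For (3) and (4): the map $\alpha_t$ is the $S$-linear multiplication $\bigwedge^s F\otimes K_t(I,M)\to K_{s+t}(I,M)$, $a\otimes h\mapsto a.h$. If $z=\sum_i a_i\otimes g_i\in Z_s(I,Z_t(I,M))$, then $\phi(g_i)=0$ for all $i$ and $\sum_i\phi(a_i)\otimes g_i=0$, so by Leibniz $\phi(\alpha_t(z))=\sum_i\phi(a_i).g_i+(-1)^s\sum_i a_i.\phi(g_i)$; the second sum vanishes since $\phi(g_i)=0$, and the first is the image under multiplication of $\sum_i\phi(a_i)\otimes g_i=0$, hence also $0$. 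This proves (3). For (4), take $g=\sum_\jb e_\jb\otimes m_\jb\in Z_{s+t}(I,M)$; by (\ref{deco}) one has $e_\ub.b_\ub(g)=\sum_{\jb\supseteq\ub}e_\jb\otimes m_\jb$, so summing over $\ub$ with $\#\ub=s$ gives $\alpha_t(\beta_t(g))=\sum_\jb\#\{\ub\subseteq\jb:\#\ub=s\}\,e_\jb\otimes m_\jb=\binom{s+t}{s}\,g$. Hence $\alpha_t\circ\beta_t=\binom{s+t}{s}\cdot\id$, and if $\binom{s+t}{s}$ is a unit in $K$, then $\binom{s+t}{s}^{-1}\alpha_t$ is a retraction of $\beta_t$, so $\beta_t$ is split injective and $Z_{s+t}(I,M)$ is a direct summand of $Z_s(I,Z_t(I,M))$.

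I expect the only genuinely nonformal step to be the sign identity $\phi_{[s]}\circ\Delta_{s,t}=\Delta_{s-1,t}\circ\phi$ in (2) (together with the minor bookkeeping in (1) of which basis monomials contain $\ub$). A convenient alternative to checking it on basis monomials is to expand the equation $\phi(g)=0$ into its homogeneous components in the $e$-basis and to observe that these components coincide, up to signs, with the components of $\phi_{[s]}(\beta_t(g))=0$, so that the vanishing of the former forces that of the latter.
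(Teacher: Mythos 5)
Your argument is correct: part (1)'s support bookkeeping, the sign identity $\phi_{[s]}\circ\Delta_{s,t}=\Delta_{s-1,t}\circ\phi$ underlying part (2) (which does hold -- it reduces, via $\sigma(\{k\},\jb\setminus k)\,\sigma(\ub',\vb)=\sigma(\{k\},\ub')\,\sigma(\ub'\cup\{k\},\vb)$, to associativity of the wedge product), the Leibniz computation in (3), and the count $\#\{\ub\subseteq\jb:\#\ub=s\}=\binom{s+t}{s}$ in (4) all check out. Note that the paper itself gives no proof of Lemma \ref{le1}, quoting it from \cite[Lemmas 2.2 and 2.4]{BCR2}; your computation is essentially the one carried out in that reference.
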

An easy but interesting fact: 
\begin{lem}
\label{ZZtop}
Let $s,t\in \N$. With the notation introduced above one has 
$$Z_s(I,Z_t(I,S))=Z_t(I,Z_s(I,S))$$ 
where both sets are interpreted as subsets of $\wedge^s F\otimes \wedge^t F$.
\end{lem}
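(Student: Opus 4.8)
The plan is to describe $Z_s(I,Z_t(I,S))$ explicitly as a subset of $\bigwedge^s F\otimes\bigwedge^t F$ in a way that is manifestly symmetric in $s$ and $t$, up to interchanging the two tensor factors. Write $\phi_s\colon\bigwedge^s F\to\bigwedge^{s-1}F$ for the degree-$s$ component of the Koszul differential $\phi$ of $K(I,S)$. I would begin by recalling the definition: for every graded $S$-module $N$ one has $K_s(I,N)=\bigwedge^s F\otimes N$ with differential $\phi_s\otimes\id_N$, so that $Z_s(I,N)=\ker(\phi_s\otimes\id_N)$.

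Next I would take $N=Z_t(I,S)$, regarded as the submodule $\ker\phi_t$ of $\bigwedge^t F$. Since $\bigwedge^s F$ is free, and hence flat, tensoring the left exact sequence $0\to Z_t(I,S)\to\bigwedge^t F\to\bigwedge^{t-1}F$ (inclusion followed by $\phi_t$) with $\bigwedge^s F$ identifies $\bigwedge^s F\otimes Z_t(I,S)$ with the kernel of $\id\otimes\phi_t\colon\bigwedge^s F\otimes\bigwedge^t F\to\bigwedge^s F\otimes\bigwedge^{t-1}F$; under this identification the differential $\phi_s\otimes\id_{Z_t(I,S)}$ of $K(I,Z_t(I,S))$ becomes the restriction of $\phi_s\otimes\id_{\bigwedge^t F}$. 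Intersecting the two kernels, inside $\bigwedge^s F\otimes\bigwedge^t F$ one gets
\[
Z_s(I,Z_t(I,S))=\ker(\phi_s\otimes\id)\cap\ker(\id\otimes\phi_t).
\]

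The right-hand side is symmetric. Exactly the same computation gives, inside $\bigwedge^t F\otimes\bigwedge^s F$, the description $Z_t(I,Z_s(I,S))=\ker(\phi_t\otimes\id)\cap\ker(\id\otimes\phi_s)$; and the canonical transposition isomorphism $\bigwedge^t F\otimes\bigwedge^s F\cong\bigwedge^s F\otimes\bigwedge^t F$ intertwines $\phi_t\otimes\id$ with $\id\otimes\phi_t$ and $\id\otimes\phi_s$ with $\phi_s\otimes\id$, so it carries this intersection onto $\ker(\id\otimes\phi_t)\cap\ker(\phi_s\otimes\id)$, which is the displayed description of $Z_s(I,Z_t(I,S))$. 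As the lemma regards both modules as subsets of $\bigwedge^s F\otimes\bigwedge^t F$ precisely through this transposition, the two subsets coincide.

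The only point that takes a little care — it is bookkeeping, not a genuine obstacle — is the identification in the middle step: one has to check that feeding the submodule $Z_t(I,S)\subseteq\bigwedge^t F$ into the functor $K(I,-)$ yields exactly one of the two commuting partial Koszul differentials of the ``double'' complex $\bigwedge^\bullet F\otimes\bigwedge^\bullet F$, the requirement of lying in $Z_t(I,S)$ being the vanishing of the other. The sign conventions usually attached to a tensor product of complexes are irrelevant here, because we intersect the kernels of the two partial differentials rather than combine them into a total differential, and scaling a map by $-1$ does not affect its kernel. Finally, I note that the same argument applies verbatim with an arbitrary finitely generated graded module $M$ in place of $S$, viewing both modules as subsets of $\bigwedge^s F\otimes\bigwedge^t F\otimes M$.
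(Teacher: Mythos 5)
Your proof is correct and takes essentially the same route as the paper: both arguments identify $Z_s(I,Z_t(I,S))$ and $Z_t(I,Z_s(I,S))$ with the manifestly symmetric intersection $\ker(\phi_s\otimes\id)\cap\ker(\id\otimes\phi_t)$ inside $\bigwedge^s F\otimes\bigwedge^t F$, the paper doing so in coordinates with respect to the basis $e_\alpha\otimes e_\beta$ where you invoke flatness of $\bigwedge^s F$. The only cosmetic difference is your explicit handling of the transposition identification, which the paper leaves implicit.
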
 
 \begin{proof} Let $$g=\sum_{\alpha,\beta}  a_{\alpha,\beta} e_{\alpha}\otimes e_{\beta} \in \wedge^s F\otimes \wedge^t F$$ 
where $a_{\alpha,\beta}\in S$ and  $\alpha$ varies in the set of subsets of cardinality $s$ on $[m]$ and $\beta$ varies in the set of subsets of cardinality $t$ on $[m]$. One has $g\in  Z_s(I, \wedge^t F ))$  if and only if 
$$\sum_{\alpha,\beta}  a_{\alpha,\beta} \phi(e_{\alpha})\otimes e_{\beta}=0$$
that is, 
$$\sum_{\alpha}  a_{\alpha,\beta} \phi(e_{\alpha})=0 \mbox{ for every } \beta$$
that is
\begin{equation}
\label{simme1}
\sum_{\alpha}  a_{\alpha,\beta} e_{\alpha}\in Z_s(I,R)\mbox{ for every } \beta
\end{equation} 
Furthermore $g\in  \wedge^s F\otimes Z_t(I,S)$ if and only if 
\begin{equation} 
\label{simme2}
\sum_{\beta}  a_{\alpha,\beta}   e_{\beta}\in  Z_t(I,S) \mbox{ for every } \alpha
\end{equation}
It follows that $g\in Z_s(I,Z_t(I,S)$ if and only if Eq.(\ref{simme1}) and Eq.(\ref{simme2}) hold. 
Symmetrically, $g\in  Z_t(I, \wedge^s F ))$ if and only if Eq.(\ref{simme2}) holds and $g\in   Z_t(I,S)\otimes  \wedge^t F$ if and only if Eq.(\ref{simme1})  holds. 
\end{proof} 

\begin{rem} As the proof shows the statement of  Lemma \ref{ZZtop} holds for every Noetherian ring. \end{rem} 

The following result allows  us, when studying  Eq.(\ref{eq:1}), to assume that the ideals we deal with have a linear resolution.

\begin{prop} 
\label{linres}
Let $I$ be a homogeneous ideal and $M$ a graded $S$-module. Let $d=\reg I$ and set $J=(I_d)$ (so that $\reg J=d$). 
Then Eq.(\ref{eq:1}) holds for $I$ and $M$ and every $i$  if it holds for  $J$ and $M$ and every $i$. 
\end{prop}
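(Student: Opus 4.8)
The plan is to relate the Koszul cycles $Z_t(I,M)$ and $Z_t(J,M)$ via the inclusion $J \subseteq I$ and the fact that $I$ and $J$ agree in all degrees $\geq d = \reg I$. First I would recall that since $\reg I = d$, the truncation $I_{\geq d} = J_{\geq d}$ and $J$ has a $d$-linear resolution, so $\reg J = d$ indeed, while the quotient $I/J$ is concentrated in degrees $< d$. The key structural input is that the Koszul complex $K(I,M)$ is built from a minimal generating set of $I$ and $K(J,M)$ from a minimal generating set of $J$; these generating sets are different, so there is no naive map of complexes. Instead I would pass to a common refinement: choose generators of $I$ containing a minimal generating set of $J$ (i.e. take all the degree-$d$ generators, which generate $J$, together with the lower-degree minimal generators of $I$), and compare the cycles using the standard fact that enlarging a generating set by Koszul-trivial elements changes $Z_t$ only up to summands of free modules twisted by the new generator degrees and lower homological cycles.

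More precisely, the main step is an exact-sequence or mapping-cone argument. Write $I = J + I'$ where $I'$ is generated by the minimal generators of $I$ of degree $< d$; there are finitely many such, all in degrees $\le d-1$. One has a short exact sequence of complexes relating $K(I,M)$ to $K(J,M)$ tensored with a Koszul complex on the extra generators, and taking cycles gives a relation of the form: $Z_t(I,M)$ is built from $Z_j(J,M)$ for $j \le t$ together with twists by sums of degrees of the low-degree generators. Since each such extra generator has degree $\le d-1 = \reg I$, a twist $Z_j(J,M)(-a)$ with $a \le (t-j)(d-1)$ contributes regularity at most $\reg Z_j(J,M) + (t-j)(d-1) \le j(1+\reg J) + \reg M + (t-j)(d-1) = j(1+d) + \reg M + (t-j)(d-1)$, which is $\le t(1+d) + \reg M$ because $1+d \ge d - 1 \ge$ (in fact $(t-j)(d-1) \le (t-j)(d+1)$, so the bound only improves). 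Thus assuming Eq.(\ref{eq:1}) for $J$ forces it for $I$.

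The hard part will be making the comparison between $Z_t(I,M)$ and the $Z_j(J,M)$ genuinely precise with the correct homological bookkeeping: the decomposition \eqref{deco} and Lemma \ref{le1} are the natural tools, applied with $\ub$ running over the subsets of indices corresponding to the low-degree generators, so that $b_\ub(g)$ lands in cycles over $J$ (or over an intermediate ideal) in a strictly lower homological degree. I expect one proceeds by induction on the number of low-degree generators, each step replacing $I$ by an ideal with one fewer such generator and using an exact sequence whose outer terms are (twists of) cycle modules over the smaller ideal in homological position $t$ and $t-1$; the regularity of an extension is bounded by the max of the regularities of the outer terms, and the twist incurred is bounded by $d-1$, which is absorbed by the slack between $d-1$ and $d+1$ in the target inequality. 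The only subtlety to watch is that the intermediate generating sets need not be minimal, so one must either work with possibly-nonminimal Koszul complexes throughout (the cycle modules only get larger, by free summands, which does not hurt the regularity bound) or quotient out the redundant part at the end.
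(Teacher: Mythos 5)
Your overall strategy is the same as the paper's: pass to the non-minimal generating set of $I$ consisting of a basis $\ff=f_1,\dots,f_m$ of $I_d$ (which minimally generates $J$) together with the minimal generators $\bg=g_1,\dots,g_v$ of $I$ of degree $<d$, observe that $\reg Z_i(I,M)\le \reg Z_i(\ff,\bg,M)$ because the extra elements are redundant generators of $I$, and then peel off the $g_j$ one at a time via an exact sequence whose outer terms are cycle modules of the smaller sequence in homological positions $i$ and $i-1$. The gap is in the second step. The ``standard fact'' you invoke --- that adjoining a generator $g$ yields a (split) exact sequence $0\to Z_i(\ff,M)\to Z_i(\ff,g,M)\to Z_{i-1}(\ff,M)(-\deg g)\to 0$, hence a twist by only $\deg g\le d-1$ --- is valid precisely when $g$ lies in the ideal $(\ff)$: lifting $h_1\in Z_{i-1}(\ff,M)$ to a cycle of $Z_i(\ff,g,M)$ requires $gh_1$ to be a boundary, which holds because $(\ff)Z_{i-1}\subseteq B_{i-1}$. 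But the $g_j$ are minimal generators of $I$ of degree $<d$, so $g_j\notin J=(\ff)$; the map onto $Z_{i-1}(\ff,M)(-\deg g_j)$ is then not surjective in general, its image is a proper submodule whose regularity is not controlled by $\reg Z_{i-1}(\ff,M)+\deg g_j$, and your degree bookkeeping breaks down at exactly this point.

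The missing idea is the paper's Lemma \ref{lem2}: since $g_j\mm^{a_j}\subseteq J$ with $a_j=d-\deg g_j$, the sequence above is exact in all degrees $\ge a_j+\deg g_j+\reg Z_{i-1}(\ff,M)$, because an $h_1$ of such degree lies in $\mm^{a_j}Z_{i-1}(\ff,M)$, whence $g_jh_1\in JZ_{i-1}(\ff,M)\subseteq B_{i-1}(\ff,M)$ and $h_1$ lifts. Consequently the contribution per extra generator is $a_j+\deg g_j=d$, not $\deg g_j\le d-1$ as you claim. Your final arithmetic happens to survive the correction, since $d\le d+1$ still gives $\max_j\{jd+(i-j)(d+1)+\reg M\}=i(d+1)+\reg M$; but as written the proposal asserts an exactness and a twist that are false for the generators actually being added, and this partial-exactness argument (surjectivity only above a threshold depending on $\reg Z_{i-1}$) is the substantive content of the proof, not a subtlety about non-minimality of the intermediate generating sets.
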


In order to prove Proposition \ref{linres} we need some auxiliary results. To this end we introduce a piece of notation. 
Given a sequence of homogeneous polynomials  $\ff=f_1,\dots,f_m$ we will denote by $K(\ff,M)$ the Koszul complex associated to the sequence $\ff$ with coefficients in $M$. And we denote by $Z(\ff,M)$ the cycles and so on. 
 Note that here we do not assume that the $f_i$ are a minimal system of generators of the ideal they generate. 
 
 We have: 
 
 \begin{lem} 
\label{lem1}
Let  $I=(\ff)$ and  $g_1,\dots,g_v\in I$.  Set $\bg=g_1,\dots,g_v$. Then $\reg Z_i(\ff,M)\leq \reg Z_i(\ff, \bg,M)$. 
 \end{lem}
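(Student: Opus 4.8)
The plan is to show that adjoining to $\ff$ finitely many elements of the ideal $(\ff)$ splits $K(\ff,M)$ off $K(\ff,\bg,M)$ as a direct summand, up to homological and internal shifts. Once this is established the inequality is immediate, since the Castelnuovo--Mumford regularity of a graded direct summand of a finitely generated graded module never exceeds that of the ambient module.

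Concretely, write $g_j=\sum_{k=1}^m c_{jk}f_k$ with $c_{jk}\in S$ homogeneous and each nonzero $c_{jk}f_k$ of degree $\deg g_j$; this is possible because $(\ff)$ is homogeneous and the $g_j$ are homogeneous. Let $F=\bigoplus_{k=1}^m S(-\deg f_k)$ with basis $e_1,\dots,e_m$ and $G=\bigoplus_{j=1}^v S(-\deg g_j)$ with basis $\varepsilon_1,\dots,\varepsilon_v$, so that $K(\ff,\bg,S)=\bigwedge^\bullet(F\oplus G)$ with differential $\phi$ the derivation induced by $e_k\mapsto f_k$, $\varepsilon_j\mapsto g_j$. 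Set $\widetilde\varepsilon_j=\varepsilon_j-\sum_{k=1}^m c_{jk}e_k\in F\oplus G$; then $\widetilde\varepsilon_j$ is homogeneous of degree $\deg g_j$ and $\phi(\widetilde\varepsilon_j)=g_j-\sum_k c_{jk}f_k=0$. Since $\{e_1,\dots,e_m,\widetilde\varepsilon_1,\dots,\widetilde\varepsilon_v\}$ is again a homogeneous basis of $F\oplus G$, in this basis $\phi$ is the derivation $e_k\mapsto f_k$, $\widetilde\varepsilon_j\mapsto 0$. Therefore $K(\ff,\bg,M)$ is isomorphic, as a complex of graded $S$-modules, to $K(\ff,M)\otimes\bigwedge^\bullet G$ with the second tensor factor carrying the zero differential (here there are no Koszul sign subtleties precisely because one of the two differentials vanishes); explicitly
$$K(\ff,\bg,M)\iso \bigoplus_{\lambda\subseteq[v]} K(\ff,M)\bigl(-\sum_{j\in\lambda}\deg g_j\bigr)[-\#\lambda],$$
where $[-p]$ denotes the shift carrying homological degree $q$ to homological degree $q+p$.

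Taking cycles in a fixed homological position $i$ commutes with finite direct sums and with both the internal and the homological shifts, and $Z_i(C[-p])=Z_{i-p}(C)$; hence
$$Z_i(\ff,\bg,M)\iso \bigoplus_{\lambda\subseteq[v]} Z_{i-\#\lambda}(\ff,M)\bigl(-\sum_{j\in\lambda}\deg g_j\bigr).$$
The summand indexed by $\lambda=\emptyset$ is exactly $Z_i(\ff,M)$, so $Z_i(\ff,M)$ is a graded direct summand of $Z_i(\ff,\bg,M)$, and consequently $\reg Z_i(\ff,M)\leq\reg Z_i(\ff,\bg,M)$.

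The argument is essentially bookkeeping, and I do not foresee a genuine obstacle. The only points that require a little care are that the coefficients $c_{jk}$ can be chosen homogeneous of the appropriate degree, so that $\varepsilon_j\mapsto\widetilde\varepsilon_j$ is a morphism of graded free modules, and the routine compatibility of $Z_i(-)$ with the homological and internal grading shifts. Note that minimality of $\ff$ (or of $\ff,\bg$) is never invoked, which is why the statement is naturally phrased for sequences of polynomials rather than for minimal generating sets.
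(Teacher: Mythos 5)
Your proof is correct and takes essentially the same route as the paper: the paper reduces by induction to $v=1$ and then cites the splitting $Z_i(\ff,\bg,M)\simeq Z_i(\ff,M)\oplus Z_{i-1}(\ff,M)(-\deg g_1)$ as obvious, which is precisely the $v=1$ case of the decomposition you derive. You simply make explicit the change of basis $\varepsilon_j\mapsto\varepsilon_j-\sum_k c_{jk}e_k$ that justifies this splitting, and handle all $v$ at once instead of inducting.
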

 \begin{proof} By induction on $v$, it is enough to prove the statement for $v=1$. The assertion is obvious since $Z_i(\ff, \bg,M)\simeq Z_i(\ff,M)\oplus Z_{i-1}(\ff,M)(-\deg g_1)$.
 \end{proof}

  \begin{lem} 
\label{lem2}
Let  $I=(\ff)$ and let $a_1,\dots,a_v\in \N$.   Let  $g_i\in I:\mm_S^{a_i}$.  Set $\bg=g_1,\dots,g_v$. 
Then 
$$\reg Z_i(\ff,\bg,M)\leq  \max\{\reg  Z_{i-\# D}(\ff,M)+\sum_{j\in D} (a_j+\deg g_j) \  :\  D\subseteq \{1,\dots,v\} \mbox{ and } \#D\leq i\}.$$
 \end{lem}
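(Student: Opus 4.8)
The plan is to reduce the statement about the Koszul complex $K(\ff, \bg, M)$ to one about $K(\ff, M)$ by peeling off the extra generators $g_1, \dots, g_v$ one at a time, and to control the regularity cost of each peeling step using the hypothesis $g_j \in I : \mm_S^{a_j}$. The key structural input is that adjoining a single polynomial $g$ to a Koszul complex gives a mapping cone: there is a short exact sequence of complexes
\begin{equation*}
0 \to K(\ff, M) \to K(\ff, g, M) \to K(\ff, M)(-\deg g)[-1] \to 0,
\end{equation*}
and the connecting map on cycles is (up to sign) multiplication by $g$. Concretely, as in the proof of Lemma \ref{lem1}, an element of $K(\ff, g, M)$ splits as $a + e_g . b$ with $a \in K(\ff, M)$ and $b \in K(\ff, M)(-\deg g)$, and the cycle condition becomes $\phi(a) = \pm g\cdot b$ together with $b \in Z_{i-1}(\ff, M)(-\deg g)$. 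This exhibits $Z_i(\ff, g, M)$ as the middle term of a short exact sequence
\begin{equation*}
0 \to Z_i(\ff, M) \to Z_i(\ff, g, M) \to W \to 0,
\end{equation*}
where $W \subseteq Z_{i-1}(\ff, M)(-\deg g)$ is the submodule of cycles $b$ such that $g\cdot b$ is a boundary in $K_{i-1}(\ff, M)$; equivalently $W$ is the preimage in $Z_{i-1}(\ff,M)$ of $(0 :_{H_{i-1}(\ff,M)} g)$, shifted by $\deg g$.

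First I would make precise the regularity estimate for $W$. Since $g \in I : \mm_S^{a}$, multiplication by $g$ sends $\mm_S^{a} Z_{i-1}(\ff, M)$ into $I \cdot Z_{i-1}(\ff,M) \subseteq B_{i-1}(\ff, M)$ (using that $I$ annihilates Koszul homology, so $I\cdot Z_{i-1}\subseteq B_{i-1}$), hence $\mm_S^{a} Z_{i-1}(\ff, M) \subseteq W$. Therefore $Z_{i-1}(\ff,M)/W$ is annihilated by $\mm_S^{a}$, so its regularity is at most $a - 1 + (\text{top degree of generators of } Z_{i-1})$; combined with the exact sequence $0 \to W \to Z_{i-1}(\ff,M) \to Z_{i-1}(\ff,M)/W \to 0$ this gives, after a standard regularity-of-an-extension argument, a bound of the shape $\reg W \le \reg Z_{i-1}(\ff, M) + a$. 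Shifting by $\deg g$ yields $\reg W(\text{as a shifted submodule}) \le \reg Z_{i-1}(\ff,M) + a + \deg g$. Feeding this into the short exact sequence for $Z_i(\ff, g, M)$ and using $\reg Z_i(\ff, g, M) \le \max\{\reg Z_i(\ff, M), \reg W\}$ (valid since the third term's regularity only needs to be controlled up to $+1$, and here we have genuine slack) gives the case $v = 1$:
\begin{equation*}
\reg Z_i(\ff, g, M) \le \max\{\reg Z_i(\ff, M),\ \reg Z_{i-1}(\ff, M) + a + \deg g\}.
\end{equation*}

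Next I would iterate. Writing $\ff' = \ff, g_1, \dots, g_{v-1}$ so that $(\ff, \bg) = (\ff', g_v)$, the $v=1$ case gives $\reg Z_i(\ff, \bg, M) \le \max\{\reg Z_i(\ff', M),\ \reg Z_{i-1}(\ff', M) + a_v + \deg g_v\}$, and then the inductive hypothesis applied to $\ff'$ (noting $g_1, \dots, g_{v-1} \in I : \mm_S^{a_j}$ still, and $(\ff')$ and $(\ff)$ generate the same ideal $I$) expands each term into a maximum over subsets $D \subseteq \{1, \dots, v-1\}$. Tracking the bookkeeping, the subset $D \subseteq \{1, \dots, v\}$ appearing in the final bound either omits $v$ (coming from the first branch) or contains $v$ (coming from the second branch, which contributes the extra $a_v + \deg g_v$ and drops the homological degree by one); in both cases the homological index is $i - \#D$ and the additive correction is $\sum_{j \in D}(a_j + \deg g_j)$, with the constraint $\#D \le i$ forced because $Z_{i-\#D}(\ff, M) = 0$ once $\#D > i$. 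This is exactly the claimed formula.

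The main obstacle I anticipate is the first step: correctly identifying $W$ and proving the estimate $\reg W \le \reg Z_{i-1}(\ff, M) + a$. The subtlety is that $W$ is not simply a Koszul cycle module but a preimage of a torsion submodule of homology, so one must argue via the annihilator condition $\mm_S^{a}(Z_{i-1}/W) = 0$ rather than by any direct complex manipulation; one must also be careful that $I$ annihilating $H_{i-1}(\ff,M)$ requires $\ff$ to generate $I$ (which holds by hypothesis even though $\ff$ need not be minimal). Everything after that — the extension arguments for regularity and the induction bookkeeping — is routine, modulo care with the degree shifts and with the edge cases $i = 0$ and $\#D = i$.
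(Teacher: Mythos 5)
Your proof is correct and follows essentially the same route as the paper: the same splitting $h=h_0+e_{m+1}h_1$, the same key observation that $g\,\mm^{a}Z_{i-1}(\ff,M)\subseteq I\,Z_{i-1}(\ff,M)\subseteq B_{i-1}(\ff,M)$ lets one lift high-degree cycles, and the same induction on $v$; the paper merely packages the cokernel step as exactness of the three-term sequence in degrees $\geq a+\deg g+\reg Z_{i-1}(\ff,M)$ instead of naming $W$ and bounding $\reg(Z_{i-1}(\ff,M)/W)$. One harmless slip: $(\ff,g_1,\dots,g_{v-1})$ need not equal $(\ff)$, but the induction only requires $g_v\mm^{a_v}\subseteq I\subseteq(\ff,g_1,\dots,g_{v-1})$, so the argument is unaffected.
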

 
 \begin{proof} By induction on $v$, it is enough to prove the statement for $v=1$. Set $a=a_1$ and $g=g_1$. 
 Let $\alpha:Z_i(\ff,\bg,M)\to Z_{i-1}(\ff,M)(-\deg g)$ be the map defined by $\alpha(h)=h_{1}$ where $h=h_0+e_{m+1}h_1$ with  $h_0\in K_i(\ff,M)$. Set $p=a+\deg g+\reg Z_{i-1}(\ff,M)$.  We claim that $\alpha$ is surjective in degrees $\geq p$. Let $h_1\in Z_{i-1}(\ff,M)(-\deg g)$ with $\deg h_1\geq p$. In other words, $h_1$ has degree $\geq a+\reg Z_{i-1}(\ff,M)$ as an element of $Z_{i-1}(\ff,M)$. That is, $h_1\in \mm^a Z_{i-1}(\ff,M)$. Hence 
 $$gh_1\in g\mm^a Z_{i-1}(\ff,M)\subset I Z_{i-1}(\ff,M)\subset B_{i-1}(\ff,M).$$
 Therefore  it exists $w\in K_i(\ff,M)$ such that $\phi(w)=gh_1$. This implies that $-w+e_{m+1}h_1\in Z_i(\ff,\bg,M)$ and hence $\alpha(-w+e_{m+1}h_1)=h_1$. 
 It follows that the complex
 $$0\to Z_i(\ff,M)\to Z_i(\ff,\bg,M)\to Z_{i-1}(\ff,M)(-\deg g)\to 0$$
 is exact in degrees $\geq p$. We deduce that: 
 $$\reg  Z_i(\ff,\bg,M)\leq \max\{ p, \reg Z_i(\ff,M), \reg  Z_{i-1}(\ff,M)+\deg g\}=\max\{ p, \reg Z_i(\ff,M)\}.$$ 
  \end{proof}
    
 We are now ready to prove Proposition \ref{linres}: 
  
  \begin{proof}[Proof of \ref{linres}] Let $g_1,\dots,g_v$ be the minimal generators of $I$ of degree $<d$ and let $f_1,\dots,f_m$ be the generators of $I_d$. Set $\ff=f_1,\dots,f_m$, $\bg=g_1,\dots,g_v$. Since the sequence $\ff,\bg$ contains a minimal system of generators of $I$   by  Lemma \ref{lem1} we have $\reg Z_i(I,M)\leq \reg Z_i(\ff,\bg,M)$.  Since $g_i\mm^{d-\deg g_i}\subset (I_d)=J$ we may use  Lemma \ref{lem2} and get 
  
$$\reg Z_i(I,M)\leq \reg Z_i(\ff,\bg,M)\leq  \max\{ jd+\reg  Z_{i-j}(J,M) : j\leq i\}.$$
But, by assumption,  
$$\reg  Z_{i-j}(J,M)\leq (i-j)(\reg J+1)+\reg M=(i-j)(d+1)+\reg M.$$
It follows that 
$$\reg Z_i(I,M)\leq  \max\{ jd+(i-j)(d+1)+\reg M : j\leq i\}=i(d+1)+\reg M.$$
  \end{proof} 
 
 \section{Examples} 
 We present in this section some examples of ideals which do not satisfy the inequality Eq.(\ref{eq:2}). 
 They are all defined by cubics, with a linear resolution and the failure of Eq.(\ref{eq:2}) comes from the fact that some boundaries are  minimal generators of the module of $2$-nd cycles.

 \begin{ex}
 \label{e1}
  Let $I$ be the ideal of the minors of size $3$ of a $3\times 5$ matrix $X=(x_{ij})$ of variables and $S=K[x_{ij}]$ so that  $\reg I=3$.   The module $Z_2=Z_2(I,S)$ (computed with CoCoA \cite{Co})  has $105$ generators of degree $8$ and $50$ generators of degree $9$. The generators of degree $9$ are indeed boundaries (i.e. the homology in generated in degree $8$).   
 The Betti table of $Z_2$ is
 $$\begin{array}{ccccccccccc}
      &  0  &  1  &  2  &  3  &  4  &   5  &  6 \\
\hline 
\hline
8:  & 105 &   90 &  21  &  - &   -  &  -  &  - \\
9:  &  50  & 225 & 420 & 420 &  240  & 75 &  10
 \end{array}
 $$
 So we have that $\reg Z_2= 9>2(\reg I+1)=8$.
\end{ex}

 \begin{ex} Let $J$ be the ideal of the leading terms of the ideal $I$ of  Example \ref{e1} with respect to a diagonal term order, i.e. $J=( x_{1i_1}x_{2i_2}x_{3i_3} : 1\leq i_1<i_2<i_3\leq 5)$.  Then $\reg J=3$ and $Z_2=Z_2(J,S)$ has minimal  generators of degree $9$  that boundaries (i.e. the homology in generated in degree $\leq 8$).   
 The Betti table of $Z_2$ is
 $$\begin{array}{ccccccccccc}
      &  0  &  1  &  2  &  3  &  4    \\
\hline 
\hline
7:  &      3&  - &   -  &  -  &  -  \\
8:  & 102 &  101 &  42  &  12 &   2   \\
9:  &    6  &    21 & 27 &     15 &  3   
 \end{array}
 $$
 So we have that $\reg Z_2= 9>2(\reg J+1)=8$.
\end{ex} 
 \begin{ex} Consider the ideals $$J_1=(x_1x_2x_3, x_1x_4x_6, x_3x_4x_5, x_4x_5x_6, x_1x_2x_6, x_1x_3x_4, x_2x_3x_5)$$ 
 and $$J_2=(x_2x_3x_6, x_1x_2x_6, x_1x_3x_5, x_1x_4x_5, x_3x_5x_6, x_1x_2x_5, x_3x_4x_6).$$ 
They have both a linear resolution.  The Betti tables of the corresponding $Z_2(J_i,S)$ are, respectively, 
 $$\begin{array}{ccccccccccc}
      &  0  &  1  &  2  &  3      \\
\hline 
\hline       
 8:  &   36  &  27  &  6  &  - \\
 9:   &  1    &  3    & 3   &  1
\end{array} 
$$ 
and 
$$
\begin{array}{ccccccccccc}
      &  0  &  1  &  2  &  3      \\
\hline 
\hline 
 7:  &   2  &  -  &  -  &  -\\
 8:  &  30 &  21&  4 & -\\
 9:  &   1  &  3  &  3 &   1
 \end{array} 
$$
so that $\reg Z_2(J_i,S)=9>8=2(\reg J_i+1)$. 
In both cases the generator of degree $9$ of $Z_2$ is a boundary, corresponding to the triplet 
$\{x_2x_3x_5, x_1x_2x_6, x_4x_5x_6\}$ in the first case and 
$\{x_3x_4x_6, x_1x_4x_5, x_1x_2x_6\}$ in the second. 
\end{ex}

 \section{The $0$-dimensional case} 
 
 The goal of this section is to prove the following 
 
 \begin{thm} 
 \label{th1}
 Assume $\dim S/I=0$ and the characteristic of $K$ is either $0$ or $>s+t$. Then  
$$\reg Z_{s+t} \leq \reg Z_{s}+\reg Z_{t}$$
holds. 
 \end{thm}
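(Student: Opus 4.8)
\emph{Step 1 (reduction).} Since $\chara K$ is $0$ or $>s+t$, the integer $\binom{s+t}{s}$ is a unit in $K$, so by Lemma \ref{le1}(4) the module $Z_{s+t}=Z_{s+t}(I,S)$ is a graded direct summand of $Z_s(I,Z_t(I,S))$; as the regularity of a direct summand cannot exceed that of the ambient module, it suffices to prove
$$\reg Z_s(I,Z_t(I,S))\le\reg Z_s+\reg Z_t.$$
If $Z_t(I,S)=0$ the same summand relation gives $Z_{s+t}=0$, so both sides are $-\infty$; if $n=1$ everything in sight vanishes. Hence we may assume $n\ge2$ and $Z_t(I,S)\ne0$.

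\emph{Step 2 (bundles on $\PP^{n-1}$).} Because $\dim S/I=0$, for every homogeneous prime $\fp\ne\mm$ some $f_i$ is a unit in $S_\fp$, so the localized Koszul complex $K(I,S)_\fp$ is contractible; tensoring a contracting homotopy with $N$ shows that $K(I,S)\otimes_SN$ is also contractible after localizing at any such $\fp$, for every $S$-module $N$. Hence $Z_t(I,S)$, $Z_s(I,S)$ and $Z_s(I,Z_t(I,S))$ are locally free on the punctured spectrum, i.e.\ they sheafify to vector bundles $\mathcal{Z}_t$, $\mathcal{Z}_s$, $\mathcal{W}$ on $\PP^{n-1}$. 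Moreover each of these modules is the kernel of a map between modules of depth $\ge2$ --- the free modules $\wedge^\bullet F$ in the first two cases, and $\wedge^\bullet F\otimes Z_t(I,S)$ in the third, $Z_t(I,S)$ being a second syzygy, hence reflexive, hence of depth $\ge2$ --- so a short depth argument gives $\depth\ge2$ for all three. Consequently, for each of them the regularity of the $S$-module equals the Castelnuovo--Mumford regularity of the associated sheaf.

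\emph{Step 3 (identification and tensor estimate).} Sheafifying the defining sequence $0\to Z_s(I,Z_t(I,S))\to\wedge^sF\otimes Z_t(I,S)\to\wedge^{s-1}F\otimes Z_t(I,S)$ and using that $\mathcal{Z}_t$ is locally free, so that $\Ker(\phi\otimes\mathrm{id}_{\mathcal{Z}_t})=(\Ker\phi)\otimes\mathcal{Z}_t$, one obtains $\mathcal{W}=\mathcal{Z}_s\otimes\mathcal{Z}_t$. It then remains to show $\reg(\mathcal{Z}_s\otimes\mathcal{Z}_t)\le\reg Z_s+\reg Z_t$. Sheafify a minimal graded free resolution of $Z_t(I,S)$: its term in homological degree $p$ involves only twists $S(-j)$ with $j\le\reg Z_t+p$. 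Tensoring this resolution of $\mathcal{Z}_t$ with the locally free $\mathcal{Z}_s$ keeps it exact, its $p$-th term is a direct sum of copies of $\mathcal{Z}_s(-j)$ with $j\le\reg Z_t+p$ and so is $(\reg Z_s+\reg Z_t+p)$-regular, and the standard estimate of regularity through a resolution gives $\reg(\mathcal{Z}_s\otimes\mathcal{Z}_t)\le\reg Z_s+\reg Z_t$. Combined with Steps 1 and 2 this is the claim.

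The two points I expect to need real care are: verifying that $Z_s(I,Z_t(I,S))$ genuinely has depth $\ge2$, so that passing to its sheaf loses no regularity (in general sheaf regularity is only a lower bound for module regularity); and the sheaf inequality $\reg(\mathcal{A}\otimes\mathcal{B})\le\reg\mathcal{A}+\reg\mathcal{B}$, which fails for arbitrary coherent sheaves and here must be drawn from the local freeness of $\mathcal{Z}_s$ together with a resolution of $\mathcal{Z}_t$ by twists of $\mathcal{O}$. The remaining ingredients --- the direct summand of Lemma \ref{le1}(4), contractibility of the localized Koszul complex, and the sheaf identity via flatness --- are routine.
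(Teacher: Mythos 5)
Your argument is correct, and its skeleton coincides with the paper's: both start from Lemma \ref{le1}(4) to realize $Z_{s+t}$ as a graded direct summand of a module of Koszul cycles with coefficients in Koszul cycles, and both then reduce to a regularity bound for a tensor product, using $\dim S/I=0$ to make the relevant modules free on the punctured spectrum. The execution of the second half differs. The paper (Proposition \ref{super}) stays at the module level: it compares $Z_t(I,Z_s(I,M))$ with $Z_t\otimes Z_s(I,M)$ via the canonical map, which has zero-dimensional kernel and cokernel, uses only $\depth>0$ of the target to transfer the regularity bound, and then quotes Caviglia \cite[Cor.3.4]{C} or Eisenbud--Huneke--Ulrich \cite[Cor.3.1]{EHU} for $\reg(Z_t\otimes N)\le \reg Z_t+\reg N$ when $\Tor_1^S(Z_t,N)$ is zero-dimensional. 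You instead pass to sheaves on $\PP^{n-1}$, which requires the stronger input $\depth\ge 2$ (your second-syzygy argument for this is valid, and the needed depth count for $Z_s(I,Z_t)$ goes through by the depth lemma applied to $0\to Z_s(I,Z_t)\to \wedge^sF\otimes Z_t\to B'\to 0$ with $B'$ inside a free module); in exchange you get the clean identification $\widetilde{Z_s(I,Z_t)}=\mathcal{Z}_s\otimes\mathcal{Z}_t$ and a self-contained proof of the tensor bound by resolving one factor and chasing cohomology, i.e.\ you reprove the special case of \cite{C}/\cite{EHU} that is actually needed. The trade-off: the paper's route is shorter and yields the more general Proposition \ref{super} with coefficients in any $M$ of positive depth (where only $\depth>0$ is available, so your sheaf-regularity-equals-module-regularity step would not apply verbatim), while yours is self-contained modulo the standard resolution-chasing lemma on projective space. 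Both points you flagged as delicate are exactly the right ones, and both are handled correctly.
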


Indeed we prove

\begin{prop}
\label{super} Let $S$ be a polynomial ring of characteristic  $0$ or $>s+t$. Assume that $M$ is graded, finitely generated  with $\depth M>0$  and  $\dim S/I=0$. 
Then $$\reg Z_{s+t}(I,M)\leq  \reg Z_t+\reg Z_s(I,M).$$
\end{prop}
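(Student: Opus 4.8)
The plan is to reduce, by means of the splitting maps of Lemma~\ref{le1}, to an estimate for a single module of Koszul cycles with coefficients in an arbitrary module of positive depth, and then to establish that estimate by comparing $Z_t(I,N)$ with $Z_t(I,S)\otimes_S N$; the hypothesis $\dim S/I=0$ will be exactly what makes this comparison work.

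First I would invoke Lemma~\ref{le1}(4). Because $\chara K$ is $0$ or $>s+t$, the integer $\binom{s+t}{t}$ is invertible in $K$, so applying that lemma with the roles of $s$ and $t$ interchanged shows that $Z_{s+t}(I,M)$ is a direct summand of the $S$-module $Z_t(I,Z_s(I,M))$; hence $\reg Z_{s+t}(I,M)\le\reg Z_t(I,Z_s(I,M))$. Writing $N=Z_s(I,M)$: since $N$ is a submodule of $\bigwedge^sF\otimes M$ and $\depth M>0$, also $\depth N>0$. Thus the whole proposition will follow from the assertion
$$(\ast)\qquad \reg Z_t(I,N)\le \reg Z_t+\reg N\qquad\text{whenever }\dim S/I=0\text{ and }\depth N>0 .$$

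To prove $(\ast)$ I would work with the natural degree-preserving map $\mu\colon Z_t(I,S)\otimes_S N\to Z_t(I,N)$ induced by the inclusion $Z_t(I,S)\subset\bigwedge^tF=K_t(I,S)$ (a cycle tensored with an element of $N$ remains a cycle). Splitting the Koszul complex $K(I,S)$ into the short exact sequences $0\to Z_i\to K_i\to B_{i-1}\to0$ and $0\to B_i\to Z_i\to H_i(I,S)\to0$ and tensoring with $N$, one identifies $\Ker\mu$ with $\Tor^S_1(B_{t-1},N)$ and $\Coker\mu$ with a quotient of $\Tor^S_1(K_{t-1}/B_{t-1},N)$. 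Here $\dim S/I=0$ enters decisively: each $H_i(I,S)$ is annihilated by $I$, hence is a finite-length module over the Artinian ring $S/I$; chasing the short exact sequences above then shows that $\Tor^S_p(B_j,N)$ and $\Tor^S_p(K_j/B_j,N)$ have finite length for all $p\ge1$, being assembled from the finite-length modules $\Tor^S_p(H_i(I,S),N)$ and $\Tor^S_p(S/I,N)$. In particular $\Ker\mu$ and $\Coker\mu$ have finite length, and the short exact sequences $0\to\Ker\mu\to Z_t(I,S)\otimes_S N\to\Image\mu\to0$ and $0\to\Image\mu\to Z_t(I,N)\to\Coker\mu\to0$ give
$$\reg Z_t(I,N)\le\max\bigl\{\reg\bigl(Z_t(I,S)\otimes_S N\bigr),\ \reg\Ker\mu,\ \reg\Coker\mu\bigr\}.$$

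The remaining task is to bound each term on the right by $\reg Z_t+\reg N$, and this is the step I expect to be the real obstacle. By an argument parallel to Proposition~\ref{linres} (using Lemmas~\ref{lem1} and~\ref{lem2}) one should first reduce to the case in which $I$ has a linear resolution, generated in a single degree $d$; then $K_t(I,S)=\bigwedge^tF$ lies in the single degree $td$, whence $td\le\indeg Z_t(I,S)\le\reg Z_t$. Using the standard inequality $\reg(A\otimes_S B)\le\reg A+\reg B$, valid when the higher $\Tor^S_i(A,B)$ vanish — together with the fact that for a finite-length module $X$ the top degree of $\Tor^S_p(X,N)$ is at most $\reg X+\reg N+p$ — one reduces the whole estimate to controlling the regularities $\reg H_i(I,S)$ for $i<t$ by the degrees occurring in $Z_t(I,S)$, e.g.\ to inequalities of the shape $\reg H_i(I,S)\le\reg Z_t-(t-i)$. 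Establishing these — the delicate bookkeeping with the finite-length $\Tor$-modules assembled from the Koszul homology of the Cohen--Macaulay ring $S/I$ — is the technical heart of the proof, and it is precisely the point at which $\dim S/I=0$ cannot be relaxed: for $\dim S/I\ge1$ the modules involved need not have finite length, and, as the examples of the preceding section show, the inequality $\reg Z_t\le t(\reg I+1)$, hence the analogue of $(\ast)$, already fails.
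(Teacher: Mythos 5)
Your reduction is the same as the paper's: Lemma \ref{le1}(4) gives $\reg Z_{s+t}(I,M)\leq \reg Z_t(I,Z_s(I,M))$, and the comparison map $\mu\colon Z_t\otimes_S N\to Z_t(I,N)$ with $N=Z_s(I,M)$ indeed has finite-length kernel and cokernel because everything in sight is locally free off $V(I)=\{\mm\}$. But your argument is not closed, and you say so yourself: the final paragraph defers ``the technical heart'' to unproved inequalities of the shape $\reg H_i(I,S)\leq\reg Z_t-(t-i)$, to be reached via a ``standard inequality $\reg(A\otimes_S B)\leq\reg A+\reg B$, valid when the higher $\Tor^S_i(A,B)$ vanish.'' That inequality is inapplicable here: $Z_t$ is not free, and $\Tor_i^S(Z_t,N)$ for $i\geq 1$ does not vanish in general --- it is merely of finite length. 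So the proposal stops exactly where the real content begins, and the route you sketch for finishing (bookkeeping with the Koszul homology of $S/I$) is not one that obviously terminates.

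The paper closes the gap with two moves you are missing. First, it never bounds $\reg\Ker\mu$ or $\reg\Coker\mu$ at all: since $Z_t(I,N)$ is a submodule of $\bigwedge^tF\otimes N\subset\bigwedge^tF\otimes\bigwedge^sF\otimes M$ and $\depth M>0$, the \emph{target} of $\mu$ has positive depth, so $H^0_{\mm}(Z_t(I,N))=0$; combined with the finite length of $\Ker\mu$ and $\Coker\mu$, the local cohomology exact sequences then give the one-sided inequality $\reg Z_t(I,N)\leq\reg(Z_t\otimes_S N)$ outright. (Your hypothesis $\depth N>0$ exists precisely to feed this step; you record it but never use it.) Second, the bound $\reg(Z_t\otimes_S N)\leq\reg Z_t+\reg N$ is not proved by hand but is exactly the theorem of Caviglia \cite{C} and Eisenbud--Huneke--Ulrich \cite{EHU}: $\reg(A\otimes_S B)\leq\reg A+\reg B$ whenever $\dim\Tor_1^S(A,B)\leq 1$, which applies because $Z_t$ is free on the punctured spectrum. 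With these two inputs no reduction to linear resolutions and no control of $\reg H_i(I,S)$ is needed.
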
 

\begin{proof}  First note that since  by Lemma \ref{le1}  $Z_{s+t}(I,M)$ is a direct summand of $Z_t(I, Z_s(I,M))$ we have 
$$\reg  Z_{s+t}(I,M)\leq \reg Z_t(I, Z_s(I,M)).$$
The canonical map $f:Z_t\otimes Z_s(I,M) \to Z_t(I, Z_s(I,M)$  becomes an isomorphism when localized at a relevant homogeneous prime because $\dim S/I=0$. Hence $f$ has $0$-dimensional kernel and cokernel. Since $Z_t(I, Z_s(I,M)$ is a submodule of a direct sum of copies of $M$ it has positive depth. It follows that $\reg Z_t(I, Z_s(I,M)\leq \reg Z_t\otimes Z_s(I,M)$. We observe that $\Tor_1^S(Z_t,N)$  has Krull dimension $0$ for every $S$-module $N$ because $Z_t$ is free when localized at a relevant homogeneous prime. 
 So we may apply  \cite[Cor.3.4]{C} or \cite[Cor.3.1]{EHU} and get $\reg Z_t\otimes Z_s(I,M)\leq   \reg  Z_t+\reg Z_s(I,M)$  
and this concludes the proof.
\end{proof} 

Now Theorem \ref{th1} is a special case ($M=S$) of  Proposition \ref{super}. 
We may deduce from  Theorem \ref{th1}  the following corollary concerning  the regularity of Koszul homology. 

 \begin{cor}
\label{subadkh} Let $S$ be a polynomial ring of characteristic  $0$ or $>s+t$. Assume $I$ is a homogeneous ideal with $\dim S/I=0$. 
Set $h_i=\reg H_i(I,S)$. We have: 
Then $$h_{s+t}  \leq s+t+1+\reg I+ \max\{ h_j-j : j<s\}+\max\{ h_j-j : j<r\}.$$
\end{cor}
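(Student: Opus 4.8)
The plan is to deduce Corollary \ref{subadkh} from Theorem \ref{th1} by translating the subadditivity statement for cycles into a statement for homology via the short exact sequences relating cycles, boundaries, and homology in the Koszul complex. First I would recall the standard exact sequences
\begin{equation*}
0\to Z_t(I,S)\to K_t(I,S)\to B_{t-1}(I,S)\to 0,\qquad 0\to B_t(I,S)\to Z_t(I,S)\to H_t(I,S)\to 0.
\end{equation*}
From the first sequence, since $K_t(I,S)$ is a free module generated in degrees that are sums of $t$ of the $\deg f_i$ (hence of degree at most $t(\reg I+1)$, and certainly controlled by $\reg I$), we can bound $\reg B_{t-1}$ in terms of $\reg Z_t$ and the generator degrees of $K_t$; conversely, the second sequence lets us bound $\reg H_t$ in terms of $\reg Z_t$ and $\reg B_t$. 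The upshot is the pair of inequalities $h_t=\reg H_t(I,S)\leq \max\{\reg Z_t,\reg B_t\}$ and $\reg B_{t-1}\leq \max\{\reg Z_t, t(1+\reg I)-1\}$ (one must be slightly careful with the shift conventions and the $-1$ coming from the mapping cone/long exact sequence in local cohomology, but these are routine). Combining, one gets $\reg Z_t\leq \max\{h_t, h_{t-1}+?\,,\ldots\}$ type bounds expressing $\reg Z_t$ through the $h_j$ for $j\leq t$ with appropriate shifts, and conversely $h_{s+t}$ through $\reg Z_{s+t}$.

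Next I would feed these translations into Theorem \ref{th1}. Writing $\reg Z_k$ in terms of $\max_{j\le k}\{h_j + c_j\}$ for suitable constants $c_j$ depending linearly on $\reg I$ (coming from the free modules $K_k$), the inequality $\reg Z_{s+t}\leq \reg Z_s+\reg Z_t$ becomes, after bounding the left side from below by something involving $h_{s+t}$ and the right side from above by the maxima over $j<s$ and $j<t$ (or $j<r$ — I take $r$ to be $t$, matching the statement's apparent intent), precisely an inequality of the shape
\begin{equation*}
h_{s+t}\leq s+t+1+\reg I+\max_{j<s}\{h_j-j\}+\max_{j<t}\{h_j-j\}.
\end{equation*}
The bookkeeping of the additive constants is where the ``$s+t+1+\reg I$'' emerges: roughly, each $\reg Z_k$ contributes a term like $k(1+\reg I)$ from the free Koszul module it sits inside minus a normalization, and the homology indices $j$ in the maxima get offset by $-j$ because $H_j$ lives one homological degree up from $H_{j-1}$ in the relevant connecting maps; assembling $s$, $t$, the two ``$+1$'' shifts, and $\reg I$ gives the stated constant. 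I would verify the constant by checking the boundary case $s=1$ or $t=1$ against the known value $\reg Z_1=1+\reg I$.

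The main obstacle I anticipate is getting the additive constant exactly right: the conversions between $\reg$ of cycles, boundaries and homology each involve a shift by $\pm 1$ (from the long exact sequence in local cohomology, where $H^i_{\mm}$ of the quotient term connects to $H^{i+1}_{\mm}$ of the sub) and the free modules $K_t(I,S)$ carry generator degrees $\deg f_{u_1}+\cdots+\deg f_{u_t}$ whose spread must be absorbed into the ``$\reg I$'' term, so one must be disciplined about whether a given bound is $\leq$ or $<$ and about the direction of each shift. A secondary, more conceptual point is ensuring that the $\dim S/I=0$ hypothesis and the characteristic restriction $>s+t$ are genuinely all that is needed — but these are inherited verbatim from Theorem \ref{th1}, so once the exact-sequence translations are nailed down the corollary follows mechanically. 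I do not expect to need anything beyond Theorem \ref{th1}, the definition of regularity via the Koszul homology of the complex, and the elementary behaviour of $\reg$ in short exact sequences.
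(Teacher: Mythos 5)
There is a genuine gap, and it sits exactly where you flagged ``routine bookkeeping'': the downward translation from $\reg Z_{s+t}$ to $h_{s+t}$. From the short exact sequence $0\to B_{s+t}\to Z_{s+t}\to H_{s+t}\to 0$ the regularity lemma for exact sequences gives only $h_{s+t}\leq \max\{\reg Z_{s+t},\ \reg B_{s+t}-1\}$, and $\reg B_{s+t}$ is controlled (via $0\to Z_{s+t+1}\to K_{s+t+1}\to B_{s+t}\to 0$) by $\reg Z_{s+t+1}$, i.e.\ by the cycles one homological step \emph{higher}. So the exact sequences alone bound $h_{s+t}$ by $\max\{z_{s+t},\, z_{s+t+1}-2\}$; feeding this into the subadditivity $z_{s+t+1}\leq z_{s+1}+z_t$ shifts an index and produces maxima over $j\leq s$ (or $j\leq t$) with a different constant, not the stated inequality. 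No amount of care with the $\pm 1$ shifts fixes this, because the input you are using is symmetric in $Z$ and $B$ and simply does not see the extra structure of the homology.

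The missing idea is that $H_i(I,S)$ is annihilated by $I$, so when $\dim S/I=0$ it is a finite-length module over the Artinian ring $S/I$; hence $\reg H_i$ is just its top nonvanishing degree, and since $H_i$ is a quotient of $Z_i$ (generated in degrees $\leq z_i$) and $(S/I)_j=0$ for $j>\reg(S/I)=\reg I-1$, one gets the key estimate
\begin{equation*}
h_i\ \leq\ z_i+\reg I-1 ,
\end{equation*}
with no reference to $B_i$ or $Z_{i+1}$. This is where the hypothesis $\dim S/I=0$ is used a second time, independently of its use in Theorem \ref{th1}. Your upward translation is essentially right and matches the paper: from $z_i=b_{i-1}+1$ and $0\to B_{i-1}\to Z_{i-1}\to H_{i-1}\to 0$ one gets $z_i\leq\max\{z_{i-1}+1,h_{i-1}+2\}$, whence by induction $z_i\leq 1+i+\max\{h_j-j: j<i\}$ (check $z_1=1+\reg I$ against $h_0=\reg I-1$). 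Combining that with $h_{s+t}\leq z_{s+t}+\reg I-1$ and $z_{s+t}\leq z_s+z_t$ gives the corollary directly, with $r=t$ as you correctly guessed.
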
 

\begin{proof}  Set $z_i=\reg Z_i(I,S)$ and $b_i=\reg B_i(I,S)$. Since $I$ has dimension $0$ and annihilates $H_i(I,S)$ we have 
$h_i\leq z_i+\reg I-1$. On the other hand, the standard short exact sequences relating Koszul cycles, boundaries and homologies, give 
$z_i=b_{i-1}+1\leq \max\{ z_{i-1}+1, h_{i-1}+2\}$. Hence
$$z_i\leq 1+i+\max\{ h_j-j : j<i \}.$$
It follows that 
$$\begin{array}{rl}
h_{s+t} & \leq z_{s+t}+\reg I-1\leq z_{s}+z_{t}+\reg I-1 \\
            & \leq s+1+\max\{ h_j-j : j<s\}+ r+1+\max\{ h_j-j : j<r\}+\reg I-1.
\end{array}
$$             
\end{proof}

\begin{rem} 
In the proof of Proposition \ref{super} it is shown that  for evert $s,t$ one has 
$$\reg Z_{s+t} \leq \reg Z_s(I, Z_t) \leq \reg (Z_s\otimes Z_t)  \leq \reg Z_s +\reg Z_t$$
 provided $\dim S/I=0$.   The three  inequalities  are strict in general and this  happens already for regular sequences. 
\begin{itemize} 
\item[(1)]  For $s=t=1$, $S=\Q[a,b,c,d,e]$,  and $I=(a^2,b^2,c^2,d^2, e^2)$  one has $\reg Z_1=7$ and 
$$ \reg Z_{2}=8<\reg Z_1(I, Z_1)=11<\reg (Z_1\otimes Z_1)=13< \reg Z_1 +\reg Z_1=14.$$
\item[(2)]  If $\dim S<5$ then  $\Tor_1^S(Z_s,Z_t)=\Tor_5^S(C_{s-1},C_{t-1})=0$ where $C_i$ is the cokernel of $K(I,S)$ in position $i$. 
Hence the resolution of $Z_s\otimes Z_t$ is  the tensor product of the resolution of $Z_s$  with that of $Z_t$. It follows that 
$\reg (Z_s\otimes Z_t)=\reg Z_s +\reg Z_t$. The other two inequalities can be  strict also for $\dim S<5$. For instance, with 
$I=(a^2,b^2,c^2)\subset \Q[a,b,c]$ one has $\reg Z_2=6, \reg Z_1(I,Z_1)=9, \reg (Z_1\otimes Z_1)=2\reg Z_1=10$. 
\end{itemize} 
 \end{rem} 
     \section{Borel-fixed ideals} 

In this section, we prove Eq.\eqref{eq:4} for Borel-fixed ideals.
Throughout this section, we assume that the characteristic of $K$ is $0$.
Let $\mathrm{GL}_n(K)$ be the general linear group with coefficients in $K$.
Any $\varphi=(a_{ij}) \in \mathrm{GL}_n(K)$ induces an automorphism of $S$,
again denoted by $\varphi$,
$$\varphi \big(f(x_1,\dots,x_n) \big)=
f \left(\sum_{k=1}^n a_{k1} x_k,\dots,\sum_{k=1}^n a_{kn} x_k \right)$$
for any $f \in S$.
A monomial ideal $I \subset S$ is said to be {\em Borel-fixed}
if $\varphi(I)=I$ for any upper triangular matrix $\varphi \in \mathrm{GL}_n(K)$.
It is well-known that a monomial ideal $I \subset S$
is Borel-fixed if and only if, 
for any monomial $f x_j \in I$ and for any $i<j$, one has $fx_i \in I$.
For a monomial ideal $I$,
we write $G(I)$ for the set of minimal monomial generators of $I$.

From now on,
we fix Borel-fixed ideals $I$ and $J$
with $G(I)=\{f_1,f_2,\dots,f_m\}$
and consider the Koszul complex $K(I,S/J)= \bigwedge^\bullet F \otimes S/J$,
where $F= \bigoplus_{i=1}^m S(-\deg f_i)$.
Since Proposition \ref{linres} says that
we may assume that $I$ is generated in a single degree to prove Eq.\eqref{eq:4},
we assume $\deg f_1 = \cdots = \deg f_m$.

Let $\varphi \in \mathrm{GL}_n(K)$
be an upper triangular matrix.
Since $\varphi(J)=J$,
$\varphi$ induces an automorphism of $S/J$ defined by $\varphi(h+J)=\varphi(h)+J$.
Also, for each $f_i \in G(I)$,
since $\phi(I)=I$ we can write $\varphi(f_i)= \sum_{j=1}^m c_{ij} f_j$, where $c_{ij} \in K$, in a unique way.
We define $\varphi(e_i)= \sum_{j=1}^m c_{ij} e_j$,
and define the $K$-linear map
$$\widetilde \varphi :K_t(I,S/J) \to K_t(I,S/J)$$
by $\widetilde \varphi (e_{u_1} \wedge \cdots \wedge e_{u_t} \otimes h)
=\varphi(e_{u_1}) \wedge \cdots \wedge \varphi(e_{u_t}) \otimes \varphi(h)$.
Then, it is clear that $\widetilde \varphi \circ \phi = \phi\circ \widetilde \varphi$,
where $\phi$ is the differential of $K(I,S/J)$.
Thus we have

\begin{lem}
\label{murai1}
With the same notation as above,
$\widetilde \varphi (Z_t(I,S/J)) \subset Z_t(I,S/J)$.
 \end{lem}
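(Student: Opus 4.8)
\textbf{Proof proposal for Lemma \ref{murai1}.}
The plan is to show directly that $\widetilde\varphi$ commutes with the differential $\phi$ of $K(I,S/J)$, so that it carries cycles to cycles. The statement asserts exactly $\widetilde\varphi(Z_t(I,S/J))\subset Z_t(I,S/J)$, and since $Z_t(I,S/J)=\Ker(\phi\colon K_t(I,S/J)\to K_{t-1}(I,S/J))$, it suffices to check that for any $g\in K_t(I,S/J)$ one has $\phi(\widetilde\varphi(g))=\widetilde\varphi(\phi(g))$: then $\phi(g)=0$ forces $\phi(\widetilde\varphi(g))=\widetilde\varphi(0)=0$. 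This is precisely the identity $\widetilde\varphi\circ\phi=\phi\circ\widetilde\varphi$ already observed in the text just before the lemma, so really the proof amounts to recording that observation and drawing the immediate conclusion.

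The verification of $\widetilde\varphi\circ\phi=\phi\circ\widetilde\varphi$ itself runs as follows. Both maps are $K$-linear, so it is enough to check the identity on basis elements $e_{u_1}\wedge\cdots\wedge e_{u_t}\otimes h$. On one side, $\phi$ sends this to $\sum_k (-1)^{k+1} e_{u_1}\wedge\cdots\widehat{e_{u_k}}\cdots\wedge e_{u_t}\otimes f_{u_k}h$ (with $f_{u_k}h$ read in $S/J$), and then $\widetilde\varphi$ replaces each $e_{u_j}$ by $\varphi(e_{u_j})=\sum_j c_{u_j,j}e_j$ and $h$ by $\varphi(h)$, while multiplying by $\varphi(f_{u_k})$; on the other side, $\widetilde\varphi$ first replaces $e_{u_j}$ by $\varphi(e_{u_j})$ and $h$ by $\varphi(h)$, and then $\phi$ differentiates. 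The two agree because $\varphi$ is a ring automorphism of $S$ (so $\varphi(f_{u_k}h)=\varphi(f_{u_k})\varphi(h)$), because $\varphi(J)=J$ makes $\varphi$ descend to $S/J$ consistently, and because by definition $\varphi(f_i)=\sum_j c_{ij}f_j$ matches $\varphi(e_i)=\sum_j c_{ij}e_j$ under $\phi$ — i.e. $\phi$ applied to the exterior algebra generator $\varphi(e_i)$ gives exactly $\varphi$ applied to $\phi(e_i)=f_i$. The only point requiring a line of care is the bookkeeping of signs in the Koszul differential when the summation over the expansion of each $\varphi(e_{u_j})$ is interchanged with the alternating sum in $\phi$; this is a routine multilinear-algebra check and presents no real difficulty.

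I do not anticipate any serious obstacle here: the lemma is a formal consequence of the compatibility of $\widetilde\varphi$ with the Koszul differential, which was already flagged in the preceding paragraph. The mildly delicate point — the sign bookkeeping in expanding $\phi(\widetilde\varphi(e_{u_1}\wedge\cdots\wedge e_{u_t}\otimes h))$ — is handled once one notes that $\widetilde\varphi$ is nothing but the functoriality of the Koszul complex applied to the automorphism $(\varphi$ on $S/J$, and the induced $\varphi$ on $F)$, for which commutation with the differential is automatic. Hence the proof is short: invoke the identity $\widetilde\varphi\circ\phi=\phi\circ\widetilde\varphi$ and conclude that $\widetilde\varphi$ preserves $\Ker\phi$ in each homological degree.
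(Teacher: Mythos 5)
Your proposal is correct and matches the paper exactly: the paper also derives the lemma immediately from the identity $\widetilde\varphi\circ\phi=\phi\circ\widetilde\varphi$ stated just before it, so $\widetilde\varphi$ preserves $\Ker\phi$ in each homological degree. Your additional verification of the commutation identity (which the paper simply calls clear) is a harmless elaboration of the same argument.
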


Note that $\widetilde \varphi$ is actually bijective and $\widetilde \varphi (Z_t(I,S/J))=Z_t(I,S/J)$.
But we do not use this fact in the proof.

Next, we introduce a term order on $\bigwedge ^t F$.
We refer the readers to \cite{CLO} for the basics on Gr\"obner basis theory for submodules of free modules.
Let $>_{\mathrm{rev}}$ be the degree reverse lexicographic order
induced by the ordering $x_1> \cdots > x_n$.
We consider the ordering $\succ$ for the basis elements of $\bigwedge^t F$ defined by
$e_{i_1} \wedge \cdots \wedge e_{i_t} \succ  e_{j_1} \wedge \cdots \wedge e_{j_t}$,
where $i_1< \cdots < i_t$ and $j_1 < \cdots <j_t$,
if (i) $f_{i_1} \cdots f_{i_t} <_\mathrm{rev} f_{j_1} \cdots f_{j_t}$
or (ii) $f_{i_1} \cdots f_{i_t} = f_{j_1} \cdots f_{j_t}$
and $x_{i_1} \cdots x_{i_t} >_\mathrm{rev} x_{j_1} \cdots x_{j_t}$.
Then we define the term order $>$ on the free $S$-module 
$\bigwedge^t F$ defined by
$e_\ub v > e_{\ub'} v'$, where $v$ and $v'$ are monomials,
if (i) $e_\ub \succ e_{\ub'}$,
or (ii) $e_\ub = e_{\ub'}$ and $v >_\mathrm{rev} v'$.
For $g=\sum_{k=1}^l c_k e_{\ub_k} v_k$, where each $c_k \in K \setminus \{0\}$ and each $v_k$ is a monomial,
let $\ini_{>}(g) = \max_{>} \{ v_1 e_{\ub_1},\dots,v_l e_{\ub_l}\}$ be the initial term of $g$
with respect to $>$,
and for a submodule $N \subset \bigwedge^t F$,
let $\ini_>(N)=\langle \ini_>(g): g \in N \rangle$
be the initial module of $N$ with respect to $>$.

Since $J$ is a monomial ideal,
we can extend the above order $>$ to the free $S/J$-module $K_t(I,S/J)=\bigwedge^t F \otimes S/J$
in a natural way.
Thus, we call an element $v + J$ such that $v$ is a monomial of $S$ which is not in $J$
a monomial of $S/J$,
and extend the term order on $S$ to $S/J$ by identifying $v+J$ and $v$.
Since $Z_t(I,S/J)$ is a submodule of $K_t(I,S/J)$,
its initial module can be written as
\begin{eqnarray}
\label{murai4-1}
\ini_>\big(Z_t(I,S/J)\big)= \bigoplus_{\ub \subset  [m],\ \#\ub =t} e_\ub \otimes L_\ub/J,
\end{eqnarray}
where $L_\ub \subset S$ is a monomial ideal which contains $J$.

\begin{lem}
\label{murai2}
The monomial ideal $L_\ub$ in Eq.\eqref{murai4-1} is Borel-fixed.
\end{lem}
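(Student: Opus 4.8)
The plan is to exploit the Borel automorphisms $\widetilde\varphi$ together with the compatibility between the term order $>$ and the group action, a standard Gröbner-basis argument for proving that a generic-initial-type construction produces Borel-fixed objects. Concretely, it suffices to show that $L_\ub$ is closed under the elementary Borel moves: whenever a monomial $v x_j \in L_\ub$ and $i < j$, then $v x_i \in L_\ub$. Since $L_\ub$ already contains $J$, which is Borel-fixed, I only need to check this for monomials $v x_j$ such that $e_\ub \otimes v x_j = \ini_>(g)$ for some cycle $g \in Z_t(I,S/J)$ that is not "coming from $J$''.

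First I would recall the key interaction: for the elementary upper triangular matrix $\varphi = \varphi_{ij}(\lambda)$ sending $x_j \mapsto x_j + \lambda x_i$ and fixing the other variables (with $i<j$), the reverse-lexicographic order and the induced order $\succ$ on $\bigwedge^t F$ behave well, because $\varphi$ applied to a monomial $x^a$ produces a polynomial whose reverse-lex-largest term is the Borel move of $x^a$ in the $x_j$-to-$x_i$ direction, i.e. $x^a$ with one factor $x_j$ replaced by $x_i$ (when that raises the monomial in $>_{\mathrm{rev}}$). The same holds at the level of $\bigwedge^t F$: the induced map $\varphi(e_i)$ is a $K$-combination of the $e_k$, and one checks that $\ini_>(\widetilde\varphi(g))$ is obtained from $\ini_>(g)$ by the corresponding elementary Borel move on the monomial part and/or on the multi-index $\ub$. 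The precise claim I would isolate as a sublemma is: if $\ini_>(g) = v\, e_\ub$ with $v x_j \mid v$ replaced appropriately, then one of the monomials $v' e_{\ub'}$ appearing in $\widetilde\varphi(g)$, where $v' e_{\ub'}$ is the Borel move of $v e_\ub$, is not cancelled, hence is $\ini_>(\widetilde\varphi(g))$. By Lemma \ref{murai1}, $\widetilde\varphi(g) \in Z_t(I,S/J)$, so this produced monomial lies in $\ini_>(Z_t(I,S/J))$, and by the direct-sum decomposition in Eq.\eqref{murai4-1} the monomial-part shift lands in $L_{\ub'}$.

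Next I would handle the bookkeeping of how the Borel move distributes between the exterior index $\ub$ and the polynomial coefficient $v$. Since $I$ is Borel-fixed and generated in a single degree, for each generator $f_u$ the automorphism $\varphi_{ij}(\lambda)$ sends $f_u$ to $f_u$ plus $\lambda$ times a sum of other generators (those $f_{u'}$ obtained from $f_u$ by a single Borel move) plus higher-order terms in $\lambda$; correspondingly $\varphi(e_u) = e_u + \lambda(\sum e_{u'}) + O(\lambda^2)$. Feeding this into $\widetilde\varphi(e_\ub \otimes h)$ and extracting the coefficient of $\lambda$, one sees that the "first-order part'' of $\widetilde\varphi - \mathrm{id}$ is the analogue of the polarization operator; iterating (or taking $\lambda$ generic and then using that $Z_t$ is a $K[\lambda]$-submodule stable under $\widetilde\varphi$) lets me pass from "move on $f_u$'' to the pure move on the $x_i,x_j$ appearing in $h$. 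The cleanest route is: fix $\lambda$ generic, note $\widetilde\varphi(g)\in Z_t(I,S/J)$, compare leading terms, and deduce the Borel move of $\ini_>(g)$ belongs to $\ini_>(Z_t(I,S/J))$; then reading off components via Eq.\eqref{murai4-1} gives $v x_i e_\ub \in \ini_>(Z_t(I,S/J))$ whenever $v x_j e_\ub$ does, i.e. $L_\ub$ is Borel-fixed. (The subtlety that the move might instead change $\ub$ is exactly where one uses that the ordering $\succ$ was defined so that Borel moves of the index increase $\succ$ when they increase $x_{i_1}\cdots x_{i_t}$ in $>_{\mathrm{rev}}$, keeping track that such index-moves are already accounted for.)

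The main obstacle I expect is precisely this cancellation analysis: showing that the expected leading monomial of $\widetilde\varphi(g)$ is not killed by cancellation with other terms, and correctly identifying, among the several monomials of $\widetilde\varphi(g)$ that could be maximal, which one is $\ini_>(\widetilde\varphi(g))$ — this requires a careful case split according to whether the relevant Borel move acts on the monomial coefficient $h$ or on one of the generators $f_u$ indexed by $\ub$, and using that $>_{\mathrm{rev}}$ respects these moves consistently on $S$ and on $\bigwedge^t F$ simultaneously. Once that sublemma is in place, the rest — invoking Lemma \ref{murai1} and reading off Eq.\eqref{murai4-1} — is immediate.
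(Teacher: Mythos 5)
Your overall strategy---act by $\widetilde\varphi$ for an upper triangular $\varphi$, invoke Lemma \ref{murai1}, and exploit the compatibility of $\succ$ with the action---is the same as the paper's. But the pivotal step as you formulate it is not correct: the Borel move $e_\ub \otimes v x_i$ of $\ini_>(g)=e_\ub\otimes vx_j$ is in general \emph{not} $\ini_>(\widetilde\varphi(g))$, and is not even a term of $\widetilde\varphi(g)$ that you can isolate from that one element alone. Upper triangularity does give
$\widetilde\varphi(g)= e_\ub \otimes c\,\varphi(h)+ \sum_{e_{\ub'} \prec e_\ub} e_{\ub'} \otimes h'_{\ub'}$,
so the leading term of $\widetilde\varphi(g)$ sits over $e_\ub$; but $\varphi(h)$ contains many monomials that are $>_{\mathrm{rev}}$-larger than $vx_i$ (those with variables pushed all the way toward $x_1$), so the single element $\widetilde\varphi(g)$ certifies membership in the initial module only for one irrelevant monomial, not for $vx_i$. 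Your proposed fallback (expand in $\lambda$ for an elementary matrix, take the first-order ``polarization'' part, and control cancellation) is precisely the part you leave unresolved, and it is where all the content of the lemma lies.

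The paper closes this gap with one observation your proposal never uses: since $I$ and $J$ are monomial ideals, $Z_t(I,S/J)$ is $\mathbb{Z}^n$-graded, so every multigraded component of $\widetilde\varphi(g)$ is again a cycle. In a fixed multidegree the $e_\ub$-coefficient of $\widetilde\varphi(g)$ is a scalar multiple of a \emph{single} monomial $w$ of $\varphi(h)$ (a monomial is determined by its multidegree), and all remaining terms of that component lie over some $e_{\ub'}\prec e_\ub$; hence that component $g_w$ satisfies $\ini_>(g_w)=e_\ub\otimes w$ exactly, with no cancellation analysis needed. Taking $w=vx_i$, which appears in $\varphi(h)$ with nonzero coefficient for a general upper triangular $\varphi$ because $vx_i\notin J$, yields $vx_i\in L_\ub$ via Eq.(\ref{murai4-1}). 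The multigrading also disposes of your worry that the move might land over a different index set: Borel moves on the $f_u$ increase the product $f_{u_1}\cdots f_{u_t}$ in $>_{\mathrm{rev}}$ and therefore produce only exterior factors $\prec e_\ub$, so they can never contribute to, or cancel against, the $e_\ub$-component. Without the multigraded decomposition (or an equivalent device) your argument does not go through as written.
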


\begin{proof}
Let $v x_j \in L_\ub$ be a monomial which is not in $J$.
We prove that $v x_i \in L_\ub$ for any $i<j$ with $v x_i \not \in J$.
Let $\varphi \in \mathrm{GL}_n(K)$ be a general upper triangular matrix
and let $g \in Z_t(I,S/J)$ with $\ini_>(g)=e_\ub \otimes v x_j$.
Write
$$g= e_\ub \otimes h+ \sum_{e_{\ub'} \prec e_\ub} e_{\ub'} \otimes h_{\ub'},$$
where each $h_{\ub'} \in S/J$
and $\ini_{>_\mathrm{rev}}(h)=vx_j$.
Then, since $\varphi$ is upper triangular,
$\widetilde \varphi(g)$ can be written as
$$\widetilde \varphi(g)= e_\ub \otimes c \varphi(h)+ \sum_{e_{\ub'} \prec e_\ub} e_{\ub'} \otimes h'_{\ub'},$$
where $c \in K \setminus \{0\}$ and each $h'_{\ub'} \in S/J$.
By Lemma \ref{murai1},
$\widetilde \varphi(g) \in Z_t(I,S/J)$.
Since $Z_t(I,S/J)$ is $\mathbb{Z}^n$-graded, for each monomial $w$ which appears in $\varphi(h)$,
there is an element $g_w \in Z_t(I,S/J)$ such that $\ini_>(g_w)=e_\ub \otimes w$.
On the other hand, since $\varphi$ is general,
$v x_i$ appears in $\varphi(h)$ for any $i <j$ with $v x_i \not \in J$.
These facts prove the desired statement.
\end{proof}
 
\begin{lem}
\label{murai3}
$\ini_>(Z_t(I,S/J))$ is generated by monomials of degree $\leq t(\reg I +1) + \reg(S/J)$.
\end{lem}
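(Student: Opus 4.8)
### Proof proposal for Lemma \ref{murai3}

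The plan is to reduce the claim to a statement about regularity of Borel-fixed ideals, exploiting the decomposition in Eq.\eqref{murai4-1} together with Lemma \ref{murai2}. By Eq.\eqref{murai4-1} we have $\ini_>(Z_t(I,S/J)) = \bigoplus_{\ub} e_\ub \otimes L_\ub/J$, and since the generators of this module are $e_\ub$ times a minimal monomial generator of $L_\ub/J$ (equivalently, a minimal generator of $L_\ub$ not already forced by $J$), the degrees occurring are $\deg e_\ub + (\text{degrees of generators of } L_\ub)$. As $\deg e_\ub = td$ where $d = \deg f_1 = \cdots = \deg f_m$ and we may assume $d = \reg I$ by Proposition \ref{linres}, it suffices to bound the generator degrees of each $L_\ub$ by $t + \reg(S/J)$. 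Since $L_\ub$ is Borel-fixed by Lemma \ref{murai2}, and a Borel-fixed ideal is generated in degrees $\leq \reg L_\ub$, the whole problem collapses to showing $\reg L_\ub \leq t + \reg(S/J)$.

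To bound $\reg L_\ub$, I would relate $L_\ub$ to $J$ via the structure of Koszul cycles. The key observation is that $L_\ub/J$ is an ideal of $S/J$ that arises as an ``initial slice'' of the cycle module, and one expects a comparison $L_\ub \supseteq (J :_S \text{something of bounded degree})$ or, going the other way, that elements of $L_\ub$ are produced by cycle relations whose coefficients live in $S/J$ and are constrained in degree by how the differential $\phi$ acts. Concretely: if $g \in Z_t(I,S/J)$ has $\ini_>(g) = e_\ub \otimes w$, then writing $g = \sum e_{\ub'} \otimes h_{\ub'}$, the cycle condition $\phi(g) = 0$ in $K_{t-1}(I,S/J)$ expresses $w$ (up to lower terms) in terms of syzygetic combinations of the $f_i$'s modulo $J$. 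Chasing this through, $L_\ub$ should be squeezed between two Borel-fixed ideals whose regularities are controlled by $\reg(S/J)$ and the single degree $d$, with the shift $t$ absorbing the homological position. I anticipate the cleanest route is an induction on $t$: the maps $\beta_t$, $\alpha_t$ of Lemma \ref{le1}, or the decomposition \eqref{deco}, let one peel off one exterior factor, reducing $Z_t(I,S/J)$ to $Z_1(I, Z_{t-1}(I,S/J))$ or similar, and then the generalities from Section 1 plus the base case $t=1$ (where $Z_1(I,S/J)$ is essentially a syzygy module of $I$ over $S/J$, whose regularity is classical) close the induction, the ``$+1$'' per step coming from $\reg Z_1 = 1 + \reg I$ type bounds.

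The main obstacle I expect is controlling the regularity of $L_\ub$ uniformly in $\ub$ — that is, proving that the initial-ideal slices $L_\ub$ do not have regularity larger than $t + \reg(S/J)$ even though a priori taking initial modules and slicing could both inflate regularity. The Borel-fixedness of $L_\ub$ (Lemma \ref{murai2}) is exactly the tool that tames this, since for Borel-fixed ideals regularity equals the maximal generator degree, so I must exhibit an explicit degree bound on the generators of $L_\ub$. The delicate part is the bookkeeping: each generator of $L_\ub$ corresponds to the initial term of some cycle $g$, and I need to argue that such a $g$ can be taken of bounded degree, using that $I$ is generated in degree $d = \reg I$ and that $S/J$ has regularity $\reg(S/J)$, so that the ``annihilator-type'' conditions forcing a monomial into $L_\ub$ are themselves witnessed in bounded degree (this is where something like Lemma \ref{lem2}, or a direct argument that $\mathfrak m^{\reg(S/J)} \cdot (\text{stuff}) \subseteq J$ in the right degrees, enters). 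Once the per-$\ub$ generator-degree bound $\leq t + \reg(S/J)$ is in hand, adding the shift $td$ with $d = \reg I$ gives $t(\reg I + 1) + \reg(S/J)$, completing the proof.
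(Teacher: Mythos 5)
Your reduction is circular exactly where the real work should happen. You reduce ``the generators of $L_\ub$ have degree $\leq t+\reg(S/J)$'' to ``$\reg L_\ub \leq t+\reg(S/J)$'' on the grounds that any ideal is generated in degrees up to its regularity; but for a Borel-fixed ideal in characteristic $0$ the Eliahou--Kervaire resolution says the regularity \emph{equals} the maximal generator degree, so the two statements are equivalent and you have made no progress --- you are back to bounding the generator degrees of $\ini_>(Z_t(I,S/J))$, which is precisely the statement of Lemma \ref{murai3}. The paper uses this equivalence in the opposite direction: it first proves the generator-degree bound of Lemma \ref{murai3} by a direct argument, and only afterwards, in the proof of Theorem \ref{thm2}, invokes Lemma \ref{murai2} and Eliahou--Kervaire to convert that bound into $\reg L_\ub\leq t+\reg(S/J)$ and hence into the regularity bound for $Z_t(I,S/J)$. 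Trying to run the Theorem's argument to prove the Lemma puts the cart before the horse.

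The actual content, which your proposal only gestures at (``the delicate part is the bookkeeping''), is an explicit induction on $t$ producing, for every cycle $g$ with $\deg g>t(\reg I+1)+\reg(S/J)$, a cycle $g'$ of strictly smaller degree whose initial term divides $\ini_>(g)$. For $t=1$ the paper exhibits two explicit families of cycles of degree $\leq \reg I+1+\reg(S/J)$, namely $[f_i]\otimes x_k-[f_i(x_k/x_{\max(f_i)})]\otimes x_{\max(f_i)}$ for $k<\max(f_i)$ and $[f_i]\otimes v$ with $v\in G(J:f_i)$, and shows every initial term is divisible by one of theirs; your remark that the base case is ``classical'' is not a substitute for this, since $Z_1(I,S/J)$ is a syzygy module over $S/J$ and its generator degrees are not controlled by $\reg I$ alone. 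For $t>1$ the argument splits on whether $\max(v)\geq\max(f_{u_1})$: in the first case one shows $g/x_{\max(v)}$ is still a cycle, using that $J$ is Borel-fixed together with the assumption $\deg v>\reg J$; in the second one applies the decomposition \eqref{deco} to extract $b\in Z_{t-1}(I,S/J)$, invokes the induction hypothesis to get a small $h$ with $\ini_>(h)$ dividing $\ini_>(b)$, and multiplies $h$ by an explicit degree-$(\reg I+1)$ element of $Z_1(I,S/J)$ whose existence again uses Borel-fixedness of $I$. None of these steps appears in your proposal, and without them the lemma is not proved.
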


\begin{proof}
We say that $e_\ub \otimes v \in K_t(I,S/J)$ divides $e_{\ub'} \otimes v' \in K_t(I,S/J)$ if $e_\ub=e_{\ub'}$ and $v$ divides $v'$.
We prove the statement by induction on $t$.
In this proof,
we assume that all the elements are homogeneous with respect to the $\mathbb Z^n$-grading.

We first consider the case $t=1$.
For a monomial $v \in S$,
we write $\max(v)$ (resp.\ $\min(v)$)
for the maximal (resp.\ minimal) integer $k$ such that $x_k$ divides $v$.
 Let
$$A=\big\{[f_i] \otimes x_k - [f_i (x_k/x_{\max(f_i)})] \otimes x_{\max(f_i)} : i=1,2,\dots,m,\ k< \max(f_i)\big\} \subset Z_1(I,S/J)$$
and
$$B=\big\{[f_i] \otimes v: i=1,2,\dots,m,\ v \in G(J:f_i)\big\} \subset Z_1(I,S/J).$$
Note that any element in $A \cup B$
has degree $\leq \reg(I)+\reg(J) =\reg(I)+1+ \reg(S/J)$.
We claim that, for any $g \in Z_1(I,S/J)$, $\ini_>(g)$ is divisible by the initial term of an element in $A \cup B$.

Let $g \in Z_1(I,S/J)$ with $\ini_>(g)=[f_i] \otimes v$.
If $\min(v) < \max(f_i)$,
it is clear that $[f_i] \otimes v$ is divisible by the initial term of an element in $A$.
Suppose $\min(v) \geq \max(f_i)$.
Then, by the definition of the ordering $\succ$,
$g$ itself is divisible by $v$.
Then $\ini_>(g/v)=[f_i] \otimes 1$,
which implies that $g/v=[f_i] \otimes 1$ since $g$ is homogeneous and $f_i \in G(I)$.
Then, since $\phi(g)=0$, we have $v f_i \in J$.
This fact says that $g$ is divisible by an elements in $B$.

Now we consider the case $t>1$.
Suppose that the statement holds for $Z_{t-1}(I,S/J)$.
Let $g \in Z_t(I,S/J)$ with $\deg (g) > t(\reg I+1) + \reg (S/J)$,
and let $\ini_>(g)=e_{u_1} \wedge \cdots \wedge e_{u_t} \otimes v$,
where $e_{u_1} \succ \cdots \succ e_{u_t}$.
We will show that there is an element $g' \in Z_t(I,S/J)$ with $\deg (g') < \deg (g)$
such that $\ini_>(g')$ divides $\ini_>(g)$.

{\em Case 1:}
Suppose $\max(v) \geq \max (f_{u_1})$.
Let $\ell= \max (v)$.
Then, by the definition of the ordering $\succ$,
$\ell= \max(v f_{u_1} \cdots f_{u_s})$ and the element $g$ is divisible by $x_\ell$.
We claim that $g/x_\ell \in Z_t(I,S/J)$.

Suppose $g/x_\ell \not \in Z_t(I,S/J)$.
Write $\phi(g/x_\ell)= \sum_{k=1}^p e_{\ub_k} \otimes c_k v_k$,
where $c_k \in K \setminus \{0\}$ and $v_k$ is a monomial in $S$
which is not in $J$.
Then $x_\ell v_k \in J$ and $\max (v_k) \leq \ell$ by the choice of $\ell$.
Also, since $\deg (g)> t(\reg I +1) + \reg(S/J)$,
$\deg v_k > \reg I + t + \reg(S/J) \geq \reg J$.
Hence there is a monomial $w_k \in J$ which strictly divides $x_\ell v_k$.
Since $J$ is Borel-fixed and $\max(v_k) \leq \ell$,
such a monomial $w_k$ can be chosen so that $w_k$ divides $v_k$,
which contradicts $v_k \not \in J$.

{\em Case 2:}
Suppose $\max(v) < \max (f_{u_1})$.
We write
$$g=a + e_1.b,$$
as in Eq.\eqref{deco}.
Then $b \in Z_{t-1}(I,S/J)$ and $\ini_>(b)=e_{u_2} \wedge \cdots \wedge e_{u_s} \otimes v$.
By the induction hypothesis,
there is an $h \in Z_{t-1}(I,S/J)$ with $\deg h \leq (t-1) (\reg I+1) + \reg(S/J)$
such that $\ini_>(h)$ divides $\ini_>(b)$.
Let $\ini_>(h)= e_{u_2} \wedge \cdots \wedge e_{u_s}  \otimes \delta$.
Take an element $x_i$ which divides $v / \delta$.
 
Observe that $[f_{u_1}]\otimes x_i - [f_{u_1}(x_i/{x_{\max(f_{u_1})}})] \otimes x_{\max(f_{u_1})} \in Z_1(I,S/J)$
since $i \leq \max(v) < \max(f_{u_1})$.
Then the element
$$g'=([f_{u_1}]\otimes x_i - [f_{u_1}(x_i/{x_{\max(f_{u_1})}})] \otimes x_{\max(f_{u_1})}).h\in Z_1(I,S/J)Z_{t-1}(I,S/J) \subset Z_t(I,S/J)$$
satisfies the desired conditions.
Indeed, $\ini_>(g')=e_{u_1} \wedge \cdots \wedge e_{u_t} \otimes \delta x_i$ divides $\ini_>(g)$
and
$$\deg (g') \leq \reg(I)+1+ (t-1)(\reg I +1) + \reg(S/J) = t(\reg I +1) + \reg(S/J)< \deg(g),$$
as desired.
\end{proof}

\begin{thm}
\label{thm2}
Let $I$ and $J$ be Borel-fixed ideals.
Then 
$$\reg Z_t(I,S/J) \leq t(\reg I +1) + \reg(S/J).$$
\end{thm}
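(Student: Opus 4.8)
The plan is to deduce Theorem \ref{thm2} from Lemma \ref{murai3} via the standard principle that a module and its initial module with respect to a term order have related homological invariants. Specifically, for a submodule $N$ of a free module over $S$ (or over $S/J$), the graded Betti numbers of $N$ are bounded above by those of $\ini_>(N)$, and in particular $\reg N \le \reg \ini_>(N)$. I would first recall or cite this fact (it follows from the flat degeneration of $N$ to $\ini_>(N)$, or from the comparison of the associated graded with respect to a filtration; see e.g. the Gr\"obner-basis literature referenced as \cite{CLO}, or the standard upper-semicontinuity of Betti numbers under Gr\"obner degeneration). Applying it with $N = Z_t(I,S/J) \subseteq K_t(I,S/J)$ gives
$$\reg Z_t(I,S/J) \le \reg \ini_>\big(Z_t(I,S/J)\big).$$

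Next I would bound the right-hand side. By Eq.\eqref{murai4-1} the initial module decomposes as a direct sum $\bigoplus_{\ub} e_\ub \otimes L_\ub/J$ of shifted copies of the monomial ideals $L_\ub/J$ (with the shift recording $\deg e_\ub$), so
$$\reg \ini_>\big(Z_t(I,S/J)\big) = \max_{\ub} \big( \reg(L_\ub/J) + \deg e_\ub \big).$$
Here I use $\deg e_\ub = \sum_{i \in \ub} \deg f_i$, which under our reduction (after Proposition \ref{linres}) equals $t \cdot \indeg(I)$. The key input is Lemma \ref{murai3}: $\ini_>(Z_t(I,S/J))$ is generated in degrees $\le t(\reg I + 1) + \reg(S/J)$, hence each $e_\ub \otimes (L_\ub/J)$ is generated in degrees $\le t(\reg I+1)+\reg(S/J)$.

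The remaining point — and the step I expect to be the main obstacle — is upgrading "generated in degrees $\le D$" to "regularity $\le D$". In general a monomial ideal generated in low degrees can have large regularity, so this needs the Borel-fixed hypothesis supplied by Lemma \ref{murai2}: each $L_\ub$ is Borel-fixed. For a Borel-fixed (equivalently, in characteristic $0$, strongly stable) ideal $L$ one has the classical Eliahou--Kervaire formula, which gives $\reg L = \indeg$-type bound, namely $\reg(S/L)$ equals the maximal degree of a minimal generator of $L$ minus one; thus $\reg L$ equals the maximal degree of a minimal generator of $L$. Applying this to $L_\ub$ and then passing to the quotient by the (also Borel-fixed) monomial ideal $J$ via the exact sequence $0 \to J \to L_\ub \to L_\ub/J \to 0$ — using $\reg J \le \reg L_\ub + 1$ or directly the Eliahou--Kervaire description — yields $\reg(L_\ub/J) + \deg e_\ub \le$ the maximal generator degree of $e_\ub \otimes (L_\ub/J)$, which is $\le t(\reg I+1)+\reg(S/J)$ by Lemma \ref{murai3}. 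Chaining the three inequalities closes the argument. I would be careful to phrase the Eliahou--Kervaire step for modules over $S/J$ rather than over $S$; alternatively, one can lift everything to $S$ by noting $Z_t(I,S/J)$-initial-module information is encoded in the $L_\ub$, work entirely with the Borel-fixed ideals $L_\ub \supseteq J$ in the polynomial ring $S$, bound $\reg(L_\ub/J)$ there, and only at the end observe $\reg$ is unchanged since $S/J$ is the relevant ambient quotient and $J$ annihilates nothing essential in this computation.
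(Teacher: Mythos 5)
Your proposal is correct and follows essentially the same route as the paper's own proof: reduce to $I$ generated in a single degree via Proposition \ref{linres}, bound $\reg Z_t(I,S/J)$ by the regularity of its initial module, use the decomposition Eq.\eqref{murai4-1} together with Lemma \ref{murai3} to bound the generator degrees of each $L_\ub$, and then invoke Lemma \ref{murai2} plus the Eliahou--Kervaire resolution and the sequence $0 \to J \to L_\ub \to L_\ub/J \to 0$ to convert the generation bound into a regularity bound. You correctly identified the one genuinely non-formal step (that Borel-fixedness is what lets generation degree control regularity), which is exactly how the paper closes the argument.
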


\begin{proof}
By Proposition \ref{linres}, we may assume that $I$
is generated in a single degree.
Consider the decomposition 
Eq.\eqref{murai4-1} before Lemma \ref{murai2}.
Then we have
\begin{align}
\label{murai4-2}
\reg(Z_t(I,S/J)) \leq \reg(\ini_> (Z_t(I,S/J))) = \max\{ \reg e_\ub \otimes L_\ub/J: \ub \subset [m],\ \#\ub=t\}.
\end{align}
On the other hand, by Lemmas \ref{murai3},
each $e_\ub \otimes L_\ub/J$ is generated by elements of degree $\leq t(\reg I+1) +\reg(S/J)$.
Thus $L_\ub$ is generated by monomials of degree $\leq t+ \reg S/J$.
Since $L_\ub$ is Borel-fixed by Lemma \ref{murai2},
the result of Eliahou and Kervaire \cite{EK}
shows that $\reg L_\ub \leq t +\reg(S/J)$.
Also the short exact sequence
$$0 \longrightarrow J \longrightarrow L_\ub \longrightarrow L_\ub/J \longrightarrow 0$$
shows $\reg L_\ub/J \leq \max\{\reg J-1,\reg L_\ub\} \leq t+ \reg(S/J)$.
Then the desired statement follows from Eq.\eqref{murai4-2}.
\end{proof}

From the above theorem,
we get the next bound for the regularity of Koszul homology.

\begin{cor}
\label{cor3}
Let $I$ and $J$ be Borel-fixed ideals.
Then 
$$\reg H_t(I,S/J) \leq (t+1)(\reg I +1) + \reg(S/J) -2.$$
\end{cor}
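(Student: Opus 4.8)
The plan is to derive Corollary \ref{cor3} from Theorem \ref{thm2} using the standard short exact sequences that relate Koszul cycles, boundaries, and homology in a fixed homological degree, exactly in the spirit of the proof of Corollary \ref{subadkh}. Write $z_t = \reg Z_t(I,S/J)$, $b_t = \reg B_t(I,S/J)$ and $h_t = \reg H_t(I,S/J)$. First I would recall that $B_t(I,S/J) \subset Z_t(I,S/J)$ and $Z_{t+1}(I,S/J) \to B_t(I,S/J)$ is surjective with kernel a shift of a boundary module; more precisely the differential of $K(I,S/J)$ gives $B_t(I,S/J) = \phi(K_{t+1}(I,S/J))$, and since the $f_i$ all have degree $d = \indeg I$ (we may assume $I$ is generated in a single degree by Proposition \ref{linres}), $K_{t+1}(I,S/J) = \bigwedge^{t+1}F \otimes S/J$ is a direct sum of copies of $(S/J)(-(t+1)d)$, so $\reg K_{t+1}(I,S/J) = (t+1)d + \reg(S/J)$.

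Next I would extract the two short exact sequences
$$0 \longrightarrow Z_{t+1}(I,S/J) \longrightarrow K_{t+1}(I,S/J) \longrightarrow B_t(I,S/J) \longrightarrow 0$$
and
$$0 \longrightarrow B_t(I,S/J) \longrightarrow Z_t(I,S/J) \longrightarrow H_t(I,S/J) \longrightarrow 0.$$
From the first sequence one gets $b_t \leq \max\{z_{t+1}-1,\ (t+1)d+\reg(S/J)\}$, and since Theorem \ref{thm2} gives $z_{t+1} \leq (t+1)(\reg I + 1) + \reg(S/J)$ while $d = \indeg I \leq \reg I$, the term $z_{t+1}-1$ dominates (or is at least as large), so $b_t \leq (t+1)(\reg I + 1) + \reg(S/J) - 1$. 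Then from the second sequence $h_t \leq \max\{b_t,\ z_t\}$, and since $z_t \leq t(\reg I+1)+\reg(S/J) \leq b_t + 1$, I would conclude — wait, this gives $h_t \le b_t$ only when $z_t \le b_t$; but in any case $h_t \le \max\{b_t, z_t\} \le (t+1)(\reg I+1)+\reg(S/J)-1$, which is one better than claimed. To land exactly on the stated bound I would instead use that $\reg H_t \le \max\{\reg B_t, \reg Z_t - 1\}$ is \emph{not} generally valid, so the cleaner route is: the long exact (co)homology-type argument gives $h_t \le \max\{b_t, z_t\}$ directly, and since both $b_t$ and $z_t$ are at most $(t+1)(\reg I+1)+\reg(S/J)-2$ — checking $z_t \le t(\reg I+1)+\reg(S/J) \le (t+1)(\reg I+1)+\reg(S/J)-2$ holds as soon as $\reg I \ge 1$, and the edge case $\reg I = 0$ (i.e. $I = S$ or generated by variables) can be dispatched separately — the bound follows.

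The only real subtlety, and the step I expect to need the most care, is the bookkeeping at the extremes: making sure the inequality $\reg Z_{t+1} - 1 \ge (t+1)d + \reg(S/J)$ actually holds so that $b_t$ is controlled by $z_{t+1}$ rather than by the Koszul module $K_{t+1}$, and handling degenerate cases such as $I$ principal (where $Z_{t}=0$ for $t\ge 1$) or $\reg I = 0$. All of these are routine once one writes $d = \indeg I$ and uses $d \le \reg I$ together with $\reg K_{t+1}(I,S/J) = (t+1)d+\reg(S/J)$; no new ideas beyond the short exact sequences and Theorem \ref{thm2} are required.
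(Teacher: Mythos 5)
Your overall strategy is the paper's: combine Theorem \ref{thm2} with the two standard short exact sequences
$0 \to Z_{t+1}(I,S/J) \to K_{t+1}(I,S/J) \to B_t(I,S/J) \to 0$ and
$0 \to B_t(I,S/J) \to Z_t(I,S/J) \to H_t(I,S/J) \to 0$, and your bookkeeping with $\reg K_{t+1}(I,S/J)=(t+1)d+\reg(S/J)$ and $d\le\reg I$ is a welcome bit of extra care (the paper simply writes $b_i=z_{i+1}-1$). However, there is a genuine gap at the last step, and as written your argument only proves the bound with $-1$ in place of $-2$. The standard regularity estimate for $0\to A\to B\to C\to 0$ is $\reg C\le\max\{\reg A-1,\ \reg B\}$: the $-1$ falls on the \emph{submodule}. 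Applied to the second sequence this gives $h_t\le\max\{b_t-1,\ z_t\}$, which is exactly what the paper uses and exactly what produces the $-2$. You instead wrote the $-1$ on the wrong term ($\reg H_t\le\max\{\reg B_t,\reg Z_t-1\}$), correctly observed that \emph{that} inequality is not valid, and then retreated to the lossy estimate $h_t\le\max\{b_t,z_t\}$. From your own first step you only have $b_t\le\max\{z_{t+1}-1,\ \reg K_{t+1}\}\le(t+1)(\reg I+1)+\reg(S/J)-1$, so $\max\{b_t,z_t\}$ is only bounded by $(t+1)(\reg I+1)+\reg(S/J)-1$; the assertion near the end that ``both $b_t$ and $z_t$ are at most $(t+1)(\reg I+1)+\reg(S/J)-2$'' is not supported by anything you derived, and I see no way to get $b_t\le\cdots-2$ directly from Theorem \ref{thm2}. (Also note the sign slip: a bound of $\cdots-1$ is \emph{weaker} than the claimed $\cdots-2$, not ``one better.'')

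The repair is short: keep your first step, $b_t\le\max\{z_{t+1}-1,\ (t+1)d+\reg(S/J)\}$, then apply the correct estimate $h_t\le\max\{b_t-1,\ z_t\}$ to get
$$h_t\ \le\ \max\{\,z_{t+1}-2,\ (t+1)d+\reg(S/J)-1,\ z_t\,\},$$
and check that each of the three terms is at most $(t+1)(\reg I+1)+\reg(S/J)-2$: the first by Theorem \ref{thm2}; the second because $d\le\reg I$ and $t\ge 1$ forces $(t+1)d+\reg(S/J)-1\le(t+1)(\reg I+1)+\reg(S/J)-(t+1)-1$; the third because $z_t\le t(\reg I+1)+\reg(S/J)$ and $\reg I\ge 1$ for a proper nonzero ideal (your edge case ``$\reg I=0$, i.e.\ generated by variables'' is off --- an ideal generated by variables has $\reg I=1$, so the only degenerate cases are $I=0$ or $I=S$, which are trivial). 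With that correction your route coincides with the paper's proof.
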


\begin{proof}
Let $b_i=\reg B_i(I,S/J)$, $z_i=\reg Z_i(I,S/J)$ and $h_i= \reg H_i(I,S/J)$.
Then,  the standard short exact sequences relating Koszul cycles, boundaries and homologies
show that $b_{i}= z_{i+1}-1$ and $h_i \leq \max\{b_{i}-1,z_i\}$
for all $i$.
Hence, by Theorem \ref{thm2}, we have
$$\reg H_t(I,S/J)=h_t \leq \max\{z_{t+1}-2,z_t\} \leq(t+1)(\reg I +1) + \reg(S/J) -2,$$
as desired.
  \end{proof}

\end{document}